\def\JJ{\mathbb J}
\def\ge{\geqslant}
\def\le{\leqslant}
\def\s{\sigma}
\def\t{\tau}
\def\<{\langle}
\def\>{\rangle}
\newcommand{\sF}{\ensuremath{\mathscr{F}}\xspace}
\newcommand{\fka}{\ensuremath{\mathfrak{a}}\xspace}
\newcommand{\fkb}{\ensuremath{\mathfrak{b}}\xspace}
\newcommand{\fkg}{\ensuremath{\mathfrak{g}}\xspace}
\newcommand{\fkh}{\ensuremath{\mathfrak{h}}\xspace}
\newcommand{\fkj}{\ensuremath{\mathfrak{j}}\xspace}
\newcommand{\fkx}{\ensuremath{\mathfrak{x}}\xspace}
\newcommand{\fky}{\ensuremath{\mathfrak{y}}\xspace}
\def\brF{\breve F}
\def\brI{\breve \CI}
\def\brK{\breve \CK}
\def\brP{\breve \CP}
\def\brsA{\breve{\mathscr A}}
\def\tSS{\tilde{\mathbb S}}
\newcommand{\BF}{\ensuremath{\mathbb {F}}\xspace}
\newcommand{{\BG}}{\ensuremath{\mathbb {G}}\xspace}
\newcommand{{\BK}}{\ensuremath{\mathbb {K}}\xspace}
\newcommand{\BS}{\ensuremath{\mathbb {S}}\xspace}
\newcommand{\CI}{\ensuremath{\mathcal {I}}\xspace}
\newcommand{\CK}{\ensuremath{\mathcal {K}}\xspace}
\newcommand{\CP}{\ensuremath{\mathcal {P}}\xspace}
\newcommand{\CR}{\ensuremath{\mathcal {R}}\xspace}
\DeclareMathOperator{\dist}{dist}
\DeclareMathOperator{\Adm}{Adm}
\newcommand{\supps}{\mathop{\rm supp}\nolimits_{\s}}
\newcommand{\EOcox}{\mathop{\rm EO}\nolimits^K_{\s, {\rm cox}}}
\DeclareMathOperator{\Gal}{Gal}
\newcommand{\id}{\ensuremath{\mathrm{id}}\xspace}
\newcommand{\inv}{{\mathrm{inv}}}
\DeclareMathOperator{\Ker}{Ker}
\DeclareMathOperator{\proj}{proj}
\def\tW{\tilde W}
\DeclareMathOperator{\supp}{supp}
\newcommand{\Flag}{\mathscr Flag}
\newtheorem{theorem}{Theorem}
\newtheorem{proposition}[theorem]{Proposition}
\newtheorem{lemma}[theorem]{Lemma}
\newtheorem{corollary}[theorem]{Corollary}
\theoremstyle{definition}
\newtheorem{definition}[theorem]{Definition}
\newtheorem{example}[theorem]{Example}
\newtheorem{remark}[theorem]{Remark}
\numberwithin{equation}{section}
\numberwithin{theorem}{section}
\renewcommand{\to}{%
   \ifbool{@display}{\longrightarrow}{\rightarrow}%
   }
\let\shortmapsto\mapsto
\renewcommand{\mapsto}{%
   \ifbool{@display}{\longmapsto}{\shortmapsto}%
   }
\newcommand{\isoarrow}{%
   \ifbool{@display}{\overset{\sim}{\longrightarrow}}{\xrightarrow\sim}%
   }
\newcommand{\CVstrat}{$\JJ$-stratification}
\newcommand{\BTstrat}{Bruhat-Tits stratification}
\begin{document}
\author[U.~G\"{o}rtz]{Ulrich G\"{o}rtz}
\address{Ulrich G\"{o}rtz\\Institut f\"ur Experimentelle Mathematik\\Universit\"at Duisburg-Essen\\45117 Essen\\Germany}
\email{ulrich.goertz@uni-due.de}
\thanks{The author was partially supported by DFG Transregio-Sonderforschungsbereich 45.}

\title{Stratifications of affine Deligne-Lusztig varieties}
\keywords{Affine Deligne-Lusztig varieties}
\subjclass[2010]{11G18, 14G35, 20G25}
\begin{abstract}
Affine Deligne-Lusztig varieties are analogues of Deligne-Lusztig varieties in the context of affine flag varieties and affine Grassmannians. They are closely related to moduli spaces of $p$-divisible groups in positive characteristic, and thus to arithmetic properties of Shimura varieties.

We compare stratifications of affine Deligne-Lusztig varieties attached to a basic element $b$. In particular, we show that the stratification defined by Chen and Viehmann using the relative position to elements of the group $\JJ_b$, the $\sigma$-centralizer of $b$, coincides with the Bruhat-Tits stratification in all cases of Coxeter type, as defined by X.~He and the author.
\end{abstract}

\maketitle

\setcounter{section}{-1}
\section{Introduction}
Affine Deligne-Lusztig varieties are closely related to moduli spaces of $p$-divisible groups (so-called Rapoport-Zink spaces), and hence to the reduction of Shimura varieties and their arithmetic properties. They are also interesting in their own: Similarly as with classical Deligne-Lusztig varieties, their cohomology gives rise to representations (of certain $p$-adic groups). For instance, one can hope that this gives an approach to the local Langlands correspondence, compare the papers by Chan~\cite{Chan} and Ivanov~\cite{Ivanov1}, \cite{Ivanov2}.

In general, the geometric structure of affine Deligne-Lusztig varieties is hard to understand. Even foundational properties such as the dimension and the sets of connected and irreducible components are not yet fully understood in general. See the survey papers by Rapoport~\cite{rapoport:guide} and He~\cite{He-CDM} and the references given there.

A general approach to understanding the geometry and cohomology of an affine Deligne-Lusztig variety is to stratify it as a union of better understood varieties. In \cite{CV}, Chen and Viehmann define a stratification (which we below call the \CVstrat) of affine Deligne-Lusztig varieties in a very general setting, using the relative position to elements of the group $\JJ$, the $\s$-centralizer of the underlying element $b$. While in their paper they consider the case where the level structure is hyperspecial (and so in particular the underlying algebraic group $G$ is assumed to be unramified), it is clear that their underlying definition applies in the general case, as well. However, it is extremely difficult to give a good description of this stratification: Neither the index set of the stratification nor the geometric structure of the strata are easy to understand. Also, the \CVstrat{} is a stratification only in the loose sense that it is a decomposition into disjoint locally closed subsets; in general, the closure of a stratum is not a union of strata. Nevertheless, Chen and Viehmann are able to give a convincing justification of their definition by showing that in several specific cases their stratification coincides with stratifications which were studied previously and which have nice properties.

One of these is the so-called Bruhat-Tits (BT) stratification, a stratification indexed in terms of the Bruhat-Tits building of the algebraic group $\JJ$ (whence the name), whose strata are classical Deligne-Lusztig varieties. This stratification exists only in particular cases, called of \emph{Coxeter type} in \cite{GH}, where the situation was discussed in detail from the point of view of affine Deligne-Lusztig varieties; see also \cite{GHN2}. Even though this is quite a special situation, it includes many important cases and this phenomenon had been studied extensively in the context of Rapoport-Zink spaces and Shimura varieties (see e.~g.~\cite{Kaiser}, \cite{Vollaard-Wedhorn}, \cite{Rapoport-Terstiege-Wilson}) where it plays an important role, for instance in the Kudla-Rapoport program.

The purpose of this paper is to provide the technical result ensuring that the $\JJ$-strata are locally closed subsets of $\Flag_K$ without the restriction to hyperspecial level structure, and to prove that the $\JJ$-stratification coincides with the BT stratification in all cases of Coxeter type, as conjectured in \cite{CV}.

To describe the results in more detail, we fix some notation.
Let $F$ be a non-archimedean local field, i.e., a finite extension of $\mathbb Q_p$, the field of $p$-adic numbers for a prime $p$, or of the form $\mathbb F_q((t))$ where $\mathbb F_q$ denotes the finite field with $q$ elements. Let $\brF$ be the completion of the maximal unramified extension of $F$, and let $\sigma$ be the Frobenius automorphism of $\brF$ with fixed field $F$.

Let $G$ be a connected semisimple group of adjoint type over $F$. We denote by $\tW$ the extended affine Weyl group over $\brF$, and by $\tSS\subset \tW$ the set of simple affine reflections. Let $K\subset \tSS$ (a ``level structure''), and let $\brK\subset G(\brF)$ be the corresponding parahoric subgroup (see Section~\ref{sec:notation} for more details).

For $b\in G(\brF)$ and $w\in \tW$ we have the \emph{affine Deligne-Lusztig variety}
\[
X_w(b) = \{ g\in G(\brF)/\brK;\ g^{-1}b\sigma(g) \in \brK w\brK \},
\]
a locally closed subscheme of the ind-scheme $G(\brF)/\brK$ (in the case of mixed characteristic, we view $G(\brF)/\brK$ and $X_w(b)$ as ind-perfect schemes in the sense of Zhu~\cite{Zhu} and Bhatt and Scholze~\cite{Bhatt-Scholze}). We also have the variant $X(\mu, b)_K$ for $\mu$ a translation, the union of the above $X_w(b)$ for $w$ in the $\mu$-admissible set. See Section~\ref{sec:adlv}.

To state the definition of the $\JJ$-stratification, denote by
\[
\JJ :=\{ g\in G(\brF);\ g^{-1}b\s(g) = b \}
\]
the $\s$-centralizer of $b$. (Since $b$ is usually fixed in the discussion, we mostly omit it from the notation.) By definition, two points $g, g'\in \Flag_K$ lie in the same $\JJ$-stratum, if and only if for all $j\in \JJ$, $\inv_K(j, g) = \inv_K(j, g')$. Here $\inv_K$ denotes the relative position map for level $\brK$ (see Def.~\ref{def-relative-position}). For the definition of the Bruhat-Tits stratification, we refer to Section~\ref{sec:three} and to~\cite{GH}.

The main theorem in this paper is

\begin{theorem}{\rm (\cite{CV}, Conj.~4.2; Theorem~\ref{main-thm}, Remark~\ref{non-basic-case})}
Let $(G, \mu, K)$ be of Coxeter type. Then the Bruhat-Tits stratification on $X(\mu, b)_K$ coincides with the $\JJ$-stratification.
\end{theorem}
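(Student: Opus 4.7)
The plan is to establish the equality of the two stratifications by showing that each is a refinement of the other. Concretely, I would argue: (a) if $g,g'\in X(\mu,b)_K$ lie in the same BT-stratum, then $\inv_K(j,g)=\inv_K(j,g')$ for every $j\in\JJ$; and (b) if $g,g'$ lie in distinct BT-strata, there exists $j\in\JJ$ with $\inv_K(j,g)\ne \inv_K(j,g')$. Together these say the two equivalence relations on $X(\mu,b)_K$ agree, and hence the stratifications coincide.

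For (a) I would exploit the explicit description of BT-strata from \cite{GH}. In Coxeter type, each stratum $X_\Delta$ attached to a simplex $\Delta$ of the Bruhat--Tits building $\mathscr B(\JJ)$ is isomorphic to a classical Deligne--Lusztig variety for the reductive quotient of the parahoric $P_\Delta:=\mathrm{Stab}_\JJ(\Delta)$, and its points can be represented as $u\cdot g_\Delta\brK$ for a canonical basepoint $g_\Delta$ and $u$ varying in a specific cell inside $P_\Delta$. For $j\in\JJ$ one has $\inv_K(j,u g_\Delta)=\brK\, j^{-1}u g_\Delta\, \brK$, and the task is to prove this double coset is independent of $u$. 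This comes down to absorbing the factor $u$ into $\brK$ after transport through $g_\Delta$, which is made possible by the special compatibility between $\brK$ and the parahorics of $\JJ$ in the Coxeter case: roughly, $g_\Delta^{-1}\brK g_\Delta \cap P_\Delta$ is a Borel-type subgroup and the $u$-variation lies in its unipotent radical.

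For (b) I would use the combinatorial structure of $\mathscr B(\JJ)$. Given $\Delta\ne\Delta'$, one can exhibit $j\in\JJ$ whose action on the basepoints of $X_\Delta$ and $X_{\Delta'}$ sends them to different $\brK$-double cosets; a convenient choice is $j\in P_\Delta\setminus P_{\Delta'}$, or a reflection of $\mathscr B(\JJ)$ across a wall separating $\Delta$ from $\Delta'$. In either case the resulting relative positions $\inv_K(j,g)$ and $\inv_K(j,g')$ must differ, because $j$ almost fixes one basepoint modulo $\brK$ but not the other.

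The main obstacle I expect is part (a): establishing constancy of $\inv_K(j,-)$ on each BT-stratum for \emph{every} $j\in\JJ$. Since $\JJ$ is non-compact, this is not an automatic consequence of any finite-dimensional argument; it requires understanding how far a given $j\in\JJ$ can be pushed through the DL-parametrization without altering its $\brK$-double coset. I anticipate that this will rely on a careful analysis using the classification of Coxeter-type triples in \cite{GH}, combined with a uniform statement on the interplay between $\brK$ and the parahorics of $\JJ$; a desirable by-product would be that only a finite explicit subset of $\JJ$ suffices to detect the stratification, from which part (b) would follow readily once the dictionary between BT-strata and simplices of $\mathscr B(\JJ)$ is made explicit.
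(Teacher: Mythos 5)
Your overall strategy (mutual refinement) matches the paper, but both halves of your sketch are missing the ideas that actually make them work. For (a), constancy of $\inv_K(j,-)$ on a BT stratum for \emph{every} $j\in\JJ$ does not come from absorbing the cell variable $u$ into $\brK$ through the basepoint; as you yourself note, nothing in your sketch controls an arbitrary, far-away $j$, and for a general element $w$ (not of Coxeter type with respect to its $\sigma$-support) the statement is simply false, so a genuinely Coxeter-specific input is indispensable. The paper's mechanism is: Lusztig's theorem (Prop.~\ref{prop-coxeter-dlv}) that a Deligne--Lusztig variety attached to a twisted Coxeter element lies in the open cell, so every alcove $\fkh$ of $Y(w)$ has relative position the longest element $w_{0,P}$ of $W_{\supps(w)}$ to the gate of the residue $\brP_{\supps(w)}/\brI$; the gate property (\cite{AB}, Prop.~5.34) then gives $\delta(\fkj,\fkh)=\delta(\fkj,\fkg)\,w_{0,P}$ for any rational alcove $\fkj$, and the gate $\fkg$ is itself rational by uniqueness (Remark~\ref{rmk-gate-rational}). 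Neither Lusztig's open-cell result nor the gate/projection formalism appears in your proposal, and without them the ``main obstacle'' you flag remains unresolved.

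For (b), your proposed separating elements --- $j\in P_\Delta\setminus P_{\Delta'}$, or a reflection across a wall separating $\Delta$ from $\Delta'$ --- will in general fail. The invariant $\inv_K$ takes values in $W_K\backslash\tW/W_K$, and multiplication by $W_K$ on both sides typically collapses exactly the distinction you want to detect when $j$ is close to the strata; the paper even remarks that $j=1$ need not separate $Y(w)$ from $Y(w')$, and that the separating elements it produces lie far from the origin. The actual argument builds $j$ by projecting one residue to the other (\cite{AB}, Prop.~5.37), extending a minimal gallery through the gate into an acute cone in the rational apartment (Haines--Ng\^o, via Lemmas~\ref{lemma-translations-far-from-walls} and~\ref{lemma-extend-gallery}, using connectedness of the affine Dynkin diagram and positive $F$-rank of $\JJ$), so that $\fkj$ is far from all $K$-walls; Lemma~\ref{lem-far-from-walls} then reduces the double-coset inequality to $w_{0,P}w_2w_1w_{0,P'}\notin W_K$, which is proved by the subexpression Lemma~\ref{lem-subexpr} together with a case analysis on the affine Dynkin diagram (including the ramified type $\tilde B$ configuration where $\Sigma'\cap P\neq\emptyset$). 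None of this combinatorial and building-theoretic machinery is present in, or readily recoverable from, your sketch, and your hoped-for by-product (a finite detecting subset of $\JJ$) is in fact a separate theorem proved independently (Theorem~\ref{thm-finiteness}), not a consequence of the stratification comparison. Finally, the non-basic case requires its own short argument with a careful choice of representative of $[b]$ (Remark~\ref{non-basic-case}), which your proposal does not address.
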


See Section~\ref{sec:coxeter-setup} for details on the setup, including the (important) choices of the particular elements $b$. To prove that the Bruhat-Tits stratification is a refinement of the $\JJ$-stratification we use a result of Lusztig (Prop.~\ref{prop-coxeter-dlv}) saying that a classical Deligne-Lusztig variety attached to a twisted Coxeter element is contained in the open cell of the ambient flag variety. The converse is more difficult and relies, in addition, on combinatorial arguments on the affine root system and the building of $G$; one of the ingredients is the theory of acute cones developed by Haines and Ng\^{o}, \cite{HN}.

Before we come to the main theorem, we generalize the following finiteness property of the \CVstrat{} which was proved for hyperspecial level in~\cite{CV}, Prop.~2.6.

\begin{theorem}{\rm (Theorem~\ref{thm-finiteness})}
Let $b\in G(\brF)$, let $\JJ_b$ be its $\s$-centralizer, and let $\JJ=\JJ_b(F)$.
Fix a parahoric level $K\subset \tSS$ and a quasi-compact subscheme $S\subset \Flag_K$ in the partial flag variety for $K$.

There exists a finite subset $J'\subset \JJ$ such that for all $g, g'\in S$ with
\[
\inv(j, g) = \inv(j, g')\ \text{for all } j\in J',
\]
we have
\[
\inv(j, g) = \inv(j, g')\ \text{for all } j\in \JJ.
\]
\end{theorem}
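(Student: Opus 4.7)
The plan is to find a bounded subset $B \subseteq \JJ$ outside of which $g \mapsto \inv(j, g)$ is constant on $S$; then, since the compact open subgroup $J_0 := \JJ \cap \brK$ covers $B$ by finitely many right cosets, any finite set $J' \subseteq B$ of representatives for the image of $B$ in $\JJ/J_0$ will suffice.

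Two easy reductions come first. For $k \in J_0 \subseteq \brK$ one has $(jk)^{-1}g = k^{-1}(j^{-1}g)$; since left multiplication by $k^{-1} \in \brK$ preserves $\brK$-Schubert cells in $\Flag_K$, it follows that $\inv(jk, g) = \inv(j, g)$, so $j \mapsto \inv(j, g)$ factors through the discrete coset space $\JJ/J_0$. Moreover, quasi-compactness of $S$ places it inside a single Schubert variety $X_{w_0} \subseteq \Flag_K$ for some $w_0 \in \tW$, and it suffices to analyze $\inv(j, \cdot)|_{X_{w_0}}$.

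The core technical claim is then: there exists a bounded $B \subseteq \JJ$ such that $j^{-1} X_{w_0}$ is contained in a single $\brK$-Schubert cell for every $j \in \JJ \setminus B$. Writing $j = k_1 t k_2$ via the Cartan decomposition $\JJ = J_0 \cdot T^+ \cdot J_0$ of the reductive $F$-group $\JJ_b$, using right $J_0$-invariance to absorb $k_2$, and replacing $g$ by $k_1^{-1}g \in X_{w_0}$ (a Schubert-cell-preserving move), the claim reduces to showing that $t^{-1} X_{w_0}$ is a single Schubert cell for $t \in T^+$ sufficiently far from $J_0$. This in turn reduces to the length-additive product identity
\[
\brK t^{-1} \brK \cdot \brK w_0 \brK \;=\; \brK t^{-1} w_0 \brK,
\]
which follows from standard convexity in the extended affine Weyl group $\tW$, concretely from the acute-cone theory of Haines-Ng\^o~\cite{HN} in its parahoric form. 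Granting the claim, $g, g' \in S$ satisfying $\inv(j, g) = \inv(j, g')$ for all $j \in J'$ satisfy the equality throughout $B$ (by right $J_0$-invariance) and trivially outside $B$, completing the proof.

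The main obstacle is the product formula when $\brK$ is a general parahoric: each double coset $\brK v \brK$ corresponds to an entire $W_K$-orbit in $\tW$, so one must choose maximal-length representatives and verify that the product on the left really collapses to a single double coset on the right, with no accidental refinement. In the hyperspecial case treated in \cite{CV}, this subtlety is invisible because the Cartan decomposition is indexed directly by dominant cocharacters; adapting the argument to the parahoric setting is precisely where the generalization requires genuine additional work.
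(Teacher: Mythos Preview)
Your core technical claim is false as stated. You assert that for $j\in\JJ$ sufficiently far from the identity, $j^{-1}X_{w_0}$ lies in a \emph{single} $\brK$-Schubert cell, and you justify this via the product identity $\brK t^{-1}\brK\cdot\brK w_0\brK=\brK t^{-1}w_0\brK$. But that identity (when it holds) only controls the \emph{open} cell $\brK w_0\brK/\brK\subset X_{w_0}$, not the closure. Already for $G=\PGL_2$ with $\brK$ hyperspecial and $w_0=\varepsilon^\lambda$, the Schubert variety $X_{w_0}$ contains the torus-fixed points $\varepsilon^\mu\brK$ for $0\le\mu\le\lambda$, and for $t=\varepsilon^N$ with $N\gg 0$ one has $\inv_K(t,\varepsilon^\mu)=\varepsilon^{N-\mu}$, which takes $\lambda+1$ distinct values. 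So $t^{-1}X_{w_0}$ never collapses to a single cell, and $\inv_K(j,-)$ is not eventually constant on $S$.

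What the product formula (applied to each cell separately) \emph{would} give is the weaker statement that for $t$ sufficiently dominant, $\inv_K(t,g)$ is determined by $\inv_K(1,g)$; combined with your coset-finiteness argument, this would let you take $J'$ to be $\{1\}$ together with representatives for $B/J_0$. That is a salvageable line, but it needs more than you have written: you invoke a Cartan decomposition $\JJ=J_0\cdot T^+\cdot J_0$, yet $J_0=\JJ\cap\brK$ is an arbitrary parahoric of the (possibly non-quasi-split) group $\JJ_b$, and there is no reason for a maximal $F$-split torus of $\JJ_b$ to sit inside the fixed maximal torus of $G$ or to normalize $\brK$. Without that, the reduction to a single torus element $t$ acting on Schubert cells of $G$ does not go through.

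The paper's proof sidesteps both problems. It first reduces to Iwahori level (so no parahoric double-coset subtleties), then \emph{enlarges} $\JJ$ to $G(F_s)$ for a suitable unramified extension $F_s$ over which $G$ becomes residually split, which makes every alcove of the rational apartment available. The argument is then purely building-theoretic: for any alcove $\fkj$ far from $\fka$ there is an adjacent rational alcove $\fkj'$ through which every minimal gallery from $\fkj$ into $S$ passes, whence $\delta(\fkj,\fkx)=\delta(\fkj,\fkj')\delta(\fkj',\fkx)$ and one descends inductively to a finite set $J'$. This is closer in spirit to your ``$\inv(j,-)$ is determined by $\inv(j',-)$ for $j'$ closer'' than to your ``$\inv(j,-)$ is constant'' claim.
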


The result of the theorem ensures that the $\JJ$-stratification has certain reasonable finiteness properties; in particular, the $\JJ$-strata are locally closed. See~\cite{CV} Cor.~2.10, Prop.~2.11.

\emph{Outline of the paper.} In Section 1, we recall Lusztig's result on classical Deligne-Lusztig varieties mentioned above, give the details for the setup concerning affine Deligne-Lusztig varieties and recall some properties of the Bruhat-Tits building that we are going to use. In Section 2, we discuss the definition and general properties of the $\JJ$-stratification, including the finiteness property stated in the introduction. Finally, in Section 3 we recall the notion of data of Coxeter type and the Bruhat-Tits stratification, and prove that it coincides with the $\JJ$-stratification.


\section{Preliminaries}

\subsection{Deligne-Lusztig varieties}\label{sec:lusztig}

Let $k_0$ be a finite field, and let $k$ be an algebraic closure of $k_0$. We consider a connected reductive group $G_0/k_0$ together with a maximal torus and a Borel subgroup $T_0 \subset B_0 \subset G_0$. By $T$, $B$ and $G$, resp., we denote the base change of these groups to $k$. Let $\s$ denote the Frobenius automorphism on $k$ with fixed field $k_0$, and likewise the isomorphism induced on $G(k)$, etc.
In the following, we often implicitly identify a variety over an algebraically closed field $k$ with its $k$-valued points.

We denote by $W$ the Weyl group of $T$, and by $w_0$ its longest element.
Let $\mathbb S\subset W$ denote the set of simple reflections defined by $B$. This defines the Bruhat order and the length function $\ell$ on $W$.

For any $w\in W$, we have the Deligne-Lusztig variety attached to $w$:
\[
X(w) = \{ gB\in G/B;\ g^{-1}\s(g)\in BwB  \}.
\]
This is a smooth locally closed subvariety of the flag variety $G/B$ of dimension $\ell(w)$.

Now $\sigma$ acts on $\mathbb S$ since $T$ and $B$ are defined over $k_0$, and we call an element $w\in W$ a \emph{twisted Coxeter} element, if $w$ can be written as the product of elements of $\mathbb S$ which lie in different $\s$-orbits, and such that every $\s$-orbit occurs among the factors.

\begin{proposition} (Lusztig, \cite{Lusztig} Cor.~2.5)\label{prop-coxeter-dlv}
Let $X\subset G/B$ the Deligne-Lusztig variety attached to a twisted Coxeter element. Then $X\subseteq Bw_0B/B$.
\end{proposition}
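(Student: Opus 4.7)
The plan is to follow Lusztig's argument in \cite{Lusztig}, reformulating the condition $gB \in X(w)$ in terms of Borel subgroups and arguing by contradiction. Setting $B' := gBg^{-1}$, the condition $gB \in X(w)$ is equivalent to $\mathrm{inv}(B', \sigma(B')) = w$, while $gB \in Bw_0B/B$ is equivalent to $B$ and $B'$ being opposite Borel subgroups. So the task is to show: if the relative position of $B'$ and $\sigma(B')$ is a twisted Coxeter element, then $B$ and $B'$ are opposite.

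Fix a reduced expression $w = s_{j_1}\cdots s_{j_r}$. By the standard factorization of Deligne--Lusztig varieties along reduced expressions, for each $gB \in X(w)$ there is a unique minimal gallery of Borel subgroups $B' = C_0, C_1, \ldots, C_r = \sigma(B')$ with $C_{i-1}, C_i$ in relative position $s_{j_i}$. The twisted Coxeter hypothesis tells us that the index set $I = \{s_{j_1}, \ldots, s_{j_r}\}$ meets every $\sigma$-orbit of $\mathbb{S}$; this is the combinatorial input we must ultimately exploit.

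Now proceed by contradiction. Suppose $g \in BvB$ with $v < w_0$. Writing $g = b_1 \dot v b_2$, one computes
\[
g^{-1}\sigma(g) \in Bv^{-1}B \cdot B\sigma(v)B,
\]
so the condition $g^{-1}\sigma(g) \in BwB$ forces $BwB \cap Bv^{-1}B\cdot B\sigma(v)B \ne \emptyset$. Expanding $Bv^{-1}B\cdot B\sigma(v)B$ by iterated BN-pair relations (each step by a simple reflection $s$ contributing either $BxsB$ or $BxsB\cup BxB$ according to whether $\ell(xs)$ is larger or smaller than $\ell(x)$), one obtains a union of Bruhat cells $\bigsqcup_{x \in S_v} BxB$ indexed by a combinatorially explicit set $S_v \subseteq W$. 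The combinatorial heart of the proof is then to show that when $v < w_0$, the set $S_v$ contains no twisted Coxeter element.

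The main obstacle is precisely this last combinatorial step. A crude support argument is not sufficient: for many $v < w_0$, the support of the Demazure product $v^{-1}\ast\sigma(v)$ already meets every $\sigma$-orbit of $\mathbb{S}$, so one must analyze the full subword structure of $S_v$ and not merely its maximum. A workable strategy, which I believe is close to Lusztig's, is to induct on the semisimple rank of $G$: choose a simple reflection $s \in I$, project via $G/B \to G/P_s$ to eliminate this factor, and apply the inductive hypothesis inside a Levi of smaller rank, checking at each step that the data remaining after the projection is still of twisted Coxeter type. The delicate point — and where the twisted Coxeter hypothesis is genuinely needed — is verifying that this descent is compatible with the $\sigma$-action so that the inductive hypothesis actually applies.
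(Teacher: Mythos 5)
You have left the crucial step unproved, so this is a sketch rather than a proof. Note first that the paper itself does not prove this proposition: it is quoted from Lusztig \cite{Lusztig}, Cor.~2.5, and only illustrated by the $GL_n$ example, so your task was effectively to reprove Lusztig's theorem. Your reduction is set up correctly as far as it goes: if $g\in BvB$ with $v<w_0$ and $g^{-1}\sigma(g)\in BwB$, then indeed $BwB$ must meet $Bv^{-1}B\cdot B\sigma(v)B$, and since that product is a union of $B$-double cosets, everything comes down to showing that no twisted Coxeter element occurs among the cells $BxB$, $x\in S_v$, of that product when $v<w_0$. But this is precisely where the actual content of Lusztig's result lies, and at this point you write that it is ``the main obstacle'' and offer only a strategy you ``believe is close to Lusztig's''. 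You correctly observe that a support argument fails (the support of the Demazure product $v^{-1}\ast\sigma(v)$ can meet every $\sigma$-orbit), which means the elementary estimates you do have (e.g.\ $S_v\subseteq\{v^{-1}y':\,y'\le\sigma(v)\}$) are not enough, and nothing in the proposal replaces them.

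The proposed rank induction is also too vague to be checked and has concrete unaddressed problems. To project away a simple reflection $s\in I$ compatibly with $\sigma$ you must use a $\sigma$-stable parabolic, hence remove a whole $\sigma$-orbit of simple reflections, not a single $s$; even granting that, the image of $X(w)$ in $G/P$ is not in any evident way a Deligne--Lusztig variety for a twisted Coxeter element of the Levi, and the statement you need is about the exact relative position $\mathrm{inv}(B,gBg^{-1})=w_0$ in $G$, which does not formally recombine from an analogous statement in a Levi of smaller rank together with information in $G/P$. (The uniqueness of the minimal gallery of type $(s_{j_1},\dots,s_{j_r})$ from $B'$ to $\sigma(B')$, which you record at the start, is true but never used.) So the proposal identifies a plausible route but stops exactly at the combinatorial heart; as it stands it does not establish the proposition, and the honest alternative---which is what the paper does---is to invoke Lusztig's Cor.~2.5 directly.
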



\begin{example}
Let us explain the case of $G=GL_n$, $k_0=\BF_p$. We identify the Weyl group for the diagonal torus with the symmetric group in the usual way, and use the transpositions $s_i = (i, i+1)$, $i=1, \dots, n-1$, as Coxeter generators corresponding to the Borel subgroup of upper triangular matrices. As explained in~\cite{Deligne-Lusztig} 2.2, the Deligne--Lusztig variety $X$ for the (twisted) Coxeter element $w = s_1 s_2 \cdots s_{n-1}$ consists of those flags $(\sF_i)_i$ in $k^n$ such that
\[
\sF_i = \sum_{j=1}^i \sigma^j(\sF_1) \qquad\text{for all } i.
\]
Looking at the Moore determinant, we see that a line of the form $\langle \sum a_i e_i \rangle$ gives rise to a point in $X$ (i.e., all sums on the right hand side of the displayed formula are in fact direct sums) if and only if the $a_i$ are linearly independent over $\BF_p$, or in other words, if and only if this line is not contained in a rational hyperplane.

From this, it is easy to see that a point $(\sF_i)_i\in X$ satisfies
\[
\dim \sF_i \cap \langle e_1, \dots, e_j \rangle_k = \max(0, i+j-n).
\]
This is precisely the Schubert condition which ensures that $X$ is contained in the open cell $Bw_0B/B$.
\end{example}

See the paper \cite{Rapoport-Terstiege-Wilson} by Rapoport, Terstiege and Wilson, Section 5, and Wu's paper \cite{Wu}, Examples 4.2, 4.3, 4.6, 4.7, for further examples for other classical groups.

\subsection{Notation}\label{sec:notation}

Now we return to the setting of the introduction: Let $F$ be a non-archimedean local field, $\overline{F}$ a separable closure of $F$, and $\brF$ the completion of its maximal unramified extension $F^{\rm un}$. We denote by $\varepsilon\in F$ a uniformizer, by $\kappa$ the residue class field of $F$, and by $\overline{\kappa}$ the residue class field of $\brF$, an algebraic closure of $\kappa$. Let $\s$ denote the Frobenius automorphism of $\brF$ over $F$.

Let $G/F$ be a connected semisimple group of adjoint type. (It is easy to extend the results below to reductive groups, but we restrict to the key case of semisimple adjoint groups here.) We have the \emph{Kottwitz homomorphism} $\kappa_G\colon G(\brF)\to \pi_1(G)_{\mathop{\rm Gal}(\overline{F}/F^{\rm un})}$, where $\pi_1(G)$ denotes the algebraic fundamental group of $G$. See~\cite{kottwitz-isoII} \S 7, \cite{Rapoport-Richartz}. Its kernel $G(\brF)_1:=\Ker(\kappa_G)$ is the subgroup of $G(\brF)$ generated by all parahoric subgroups. The analogous result also holds over $F$, and we again denote the concerning subgroup by $G(F)_1$. See~\cite{Richarz} Lemma 4.3.

Fix a maximal $\brF$-split torus of $G$ over $F$ and denote by $T$ its centralizer, a maximal torus in $G$ (because $G$ is quasi-split over $\brF$). Let $N$ be the normalizer of $T$ in $G$. These choices give rise to the (relative) finite Weyl group $W_0 = N(\brF)/T(\brF)$ and the Iwahori-Weyl group $\tW = N(\brF)/T(\brF)_1$, where $T(\brF)_1 \subset T(\brF)$ is the unique parahoric subgroup, or equivalently the kernel of the Kottwitz homomorphism for the group $T$.


We denote by $\breve{\mathcal B} = \mathcal B(G, \brF)$ the building of $G$ over $\brF$. Associated with $S\subset G$, there is the ``standard apartment'' $\brsA$, an affine space under the vector space $V = X_*(T)_{\Gal(\overline{F}/F^{\rm un})} \otimes_{\mathbb Z}\mathbb R$ on which $\tW$ acts by affine transformations. The Frobenius $\sigma$ also acts on $\brsA$, and we fix a $\sigma$-invariant alcove $\fka$ which we call the base alcove. We fix a special vertex $0$ in the closure of $\fka$ which is fixed under the Frobenius automorphism of the unique quasi-split inner form of $G$, and using this vertex as the base point, we identify $\brsA$ with $V$. This choice of special vertex also gives rise to a decomposition of $\tW$ as a semi-direct product $\tW = X_*(T)_{\Gal(\overline{F}/F^{\rm un})} \rtimes W_0$ (but note that the inclusion $W_0\subset \tW$ is not $\sigma$-equivariant in general).

Inside $\tW$ we have the affine Weyl group $W_a$; we can define it as the extended affine Weyl group of the simply connected cover of the derived group of $G$. In terms of the Kottwitz homomorphism $\kappa_G$, we can express $W_a$ as $W_a=(N(\brF)\cap G(\brF)_1)/T(\brF)_1$, see~\cite{Richarz}.

From the choice of a base alcove, we obtain a system of simple affine reflections $\tSS\subset W_a$. Then $\tSS$ generates $W_a$ and $(W_a, \tSS)$ is a Coxeter system. We extend the length function and the Bruhat order to $\tW$ in the usual way; an element has length $0$ if and only if it fixes the base alcove $\fka$. For a subset $P\subseteq \tSS$, we denote by $W_P\subseteq W_a$ the subgroup generated by the elements of $P$, and by ${}^P \tW$ the set of minimal length representatives in $\tW$ of the cosets in $W_P\backslash \tW$. 

The set $P$ also gives rise to a standard parahoric subgroup $\brP$: It is the subgroup of $G(\brF)$ generated by $\brI$ and (lifts to $N(\brF)$ of) the elements of $P$.

We denote by $\Phi$ the set of finite roots in the unique reduced affine root system in $V$ attached to the relative affine root system of $G$ over $\brF$, and for $\alpha\in\Phi$ and $k\in\mathbb Z$, we denote by $H_{\alpha, k} = \{ v\in V;\ \langle \alpha, v\rangle = k\}$ the corresponding affine root hyperplane.

We denote by $\Lambda$ the lattice of translations of $\brsA$ which arise from the action of $\tW$. Since $G$ is assumed to be of adjoint type, $X_*(T)_{\Gal(\overline{F}/F^{\rm un})}$ is torsion-free, so that $\Lambda = X_*(T)_{\Gal(\overline{F}/F^{\rm un})}$. For $\lambda\in \Lambda$, we denote by $\varepsilon^\lambda$ the corresponding element in $\tW$.


\subsection{The Bruhat-Tits building of $G$}

We now come back to the building of $G$ and discuss it in a little more detail.
The extended affine Weyl group $\tW$ acts on the set of alcoves of $\breve{\mathcal A}$. Having chosen a base alcove $\fka$, mapping $w\in W_a$ to $w\fka$, we can identify the affine Weyl group $W_a$ with the set of alcoves in $\breve{\mathscr A}$.

The Frobenius $\sigma$ acts on the building $\breve{\mathcal B} = \mathcal B(G, \brF)$ of $G$ over $\brF$. If we consider the building as a metric space (which carries a simplicial structure), then we can identify the set of fixed points of $\sigma$ with the rational building $\mathcal B=\mathcal B\mathcal(G, F)$ of $G$ over $F$. Note that in general the embedding $\mathcal B \subset \breve{\mathcal B}$ is not induced by a morphism of simplicial complexes, i.e., a vertex of $\mathcal B$ does not necessarily map to a vertex of $\breve{\mathcal B}$ (see the example below).

The same situation is, of course, obtained if we pass to another form of $G$ over $F$. In particular, for $\tau\in N(\brF)$ with $\tau\brI\tau^{-1}=\brI$, i.e., $\tau$ gives a length $0$ element in $\tW$, we can consider the twisted Frobenius $\s_\JJ:= \mathop{\rm Int}(\tau)\circ \s: x\mapsto \tau\s(x)\tau^{-1}$ on $G(\brF)$ and the corresponding inner form $\JJ$ of $G$ over $F$ (where the notation is chosen in view of the applications to affine Deligne-Lusztig varieties, see below). In this situation, the affine and extended affine Weyl groups over $F$ can be identified with the subgroups of $W_a$ and $\tW$, respectively, consisting of the elements fixed by the Frobenius automorphism (induced by) $\s_\JJ$. We denote the rational affine Weyl group corresponding to $\JJ$ by $W_a^\JJ$. For the buildings, we get $\mathcal B(\JJ, F)\subset \mathcal B(\JJ,\brF)=\mathcal B(G, \brF)$. See~\cite{Richarz} for a detailed discussion of the relation between the affine Weyl groups over $F$ and over $\brF$. See also \cite{RZ:indag}.

\begin{example}\label{example-rational-building}
Let $G/F$ be a quasi-split unitary group of rank $3$ which splits over an unramified quadratic extension of $F$. The affine Dynkin diagram of $G$ then is a circle with $4$ vertices $0$, $1$, $2$, $3$, on which the Frobenius acts by exchanging $1$ and $3$, and fixing $0$ and $2$. In $\tW$, there is a length $0$ element $\tau$ which acts on $\tSS$ (by conjugation) by mapping $1\mapsto 2\mapsto 3\mapsto 0\mapsto 1$. The twisted Frobenius $\mathop{\rm Int}(\tau)\circ \s$ exchanges $0$ and $1$, and $2$ and $3$. 

The fix points of the twisted Frobenius inside the base alcove are the points on the line from the midpoint of the edge connecting $0$ and $1$ to the midpoint of the edge connecting $2$ and $3$. See the figure. In particular, the origin of the apartment over $\brF$ does not lie in the rational apartment. (As the origin of the rational apartment we choose the unique vertex of the rational base alcove such that closure of corresponding $\brF$-face contains the $\brF$-origin.)

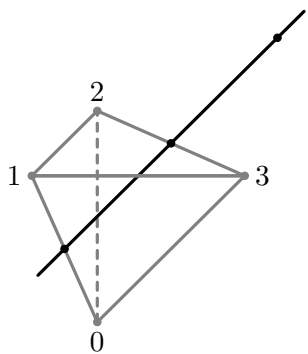
\begin{figure}
\begin{tikzpicture}[line join = round, line cap = round, scale=0.7]
\pgfmathsetmacro{\factor}{1/2};
\coordinate [label=below:0] (D) at (0,-2,2*\factor);
\coordinate [label=left:1] (B) at (-2,0,-2*\factor);
\coordinate [label=above:2] (C) at (0,2,2*\factor);
\coordinate [label=right:3] (A) at (2,0,-2*\factor);

\coordinate (E) at (-1, -1, 0);
\coordinate (F) at (1, 1, 0);
\draw[dashed, gray, very thick] (D)--(C);

\draw[very thick] (E)--(-1.5, -1.5, 0);
\draw[very thick] (3.5, 3.5, 0)--(F);
\draw[very thick] (E)--(F);

\draw[-, gray, very thick] (A)--(D)--(B)--cycle;
\draw[-, gray, very thick] (A) --(C)--(B)--cycle;

\draw[fill=black] (E) circle (.07);
\draw[fill=black] (F) circle (.07);
\draw[fill=black] (3, 3, 0) circle (.07);

\draw[fill=gray, color=gray] (A) circle (.07);
\draw[fill=gray, color=gray] (B) circle (.07);
\draw[fill=gray, color=gray] (C) circle (.07);
\draw[fill=gray, color=gray] (D) circle (.07);

\end{tikzpicture}
\caption{The base alcove in the building $\mathcal B(G, \brF)$ (gray lines) and part of the standard apartment in the rational building $\mathcal B(\JJ, F)$ (black line).}
\end{figure}
\end{example}

\subsection{The gate property}\label{sec:gate}

In this section we recall a number of building theoretic concepts and results. Our reference is the book \cite{AB} by Abramenko and Brown.

For any two alcoves $\fkx$, $\fky$ in $\breve{\mathcal B}$, we have the relative position or \emph{Weyl distance} $\delta(\fkx, \fky)\in W_a$, which we can obtain by choosing an apartment containing $\fkx$ and $\fky$, identifying the set of alcoves in the apartment with the affine Weyl group, and setting $\delta(\fkx, \fky) = x^{-1}y$, where $x, y\in W_a$ are the elements corresponding to $\fkx$ and $\fky$, resp., under this identification. This element is independent of all choices. Let $(t_1, \dots, t_r)$ with $t_i\in \tSS$ be the type of a gallery from $\fkx$ to $\fky$ which is contained in one apartment (which is always the case if the gallery is minimal). Then $\delta(\fkx, \fky) = t_1 \cdots t_r$.

For $K\subset \tSS$, we also define the \emph{$K$-Weyl distance} $\delta_K(\fkx, \fky):= W_K\delta(\fkx, \fky) W_K \in W_K\backslash \tW/W_K$. With this notation, $\delta = \delta_\emptyset$.

If $\mathcal M$ and $\mathcal N$ are sets of alcoves, we define
\[
\delta(\mathcal M, \mathcal N) := \{ \delta(\fkx, \fky);\ \fkx\in \mathcal M,\ \fky\in \mathcal N\}.
\]

We also define the distance $\dist(\fkx, \fky):=\ell(\delta(\fkx, \fky))$ between alcoves $\fkx$ and $\fky$ as the length of their Weyl distance.

Given an alcove $\fkx$ and a set $S$ of alcoves, we write $\dist(\fkx, S):= \min_{\fky\in S} \dist(\fkx, \fky)$.

\begin{definition} (\cite{AB}, Def.~5.26)
Let $P\subseteq \tSS$. For an alcove $\fkx$ we call the set
\[
\{ \fky;\ \delta(\fkx, \fky)\in W_P \} \]
the $P$-residue of $\fkx$. A set of alcoves is called a $P$-residue, if it is of the above form for some alcove $\fkx$. The set $P$ is called the type of the residue.
\end{definition}

It is easy to see that every residue has a unique type. In fact, we have $\delta(\CR, \CR) = P$, and more generally one easily checks:

\begin{lemma}{\rm (\cite{AB}, Lemma 5.29)}
Let $\CR$ be a $P$-residue, and $\CR'$ a $P'$-residue for subsets $P, P'\subseteq \tSS$. Then there exists $w\in W_a$ such that
\[
\delta(\CR, \CR') = W_P w W_{P'}.
\]
\end{lemma}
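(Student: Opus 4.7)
The plan is to establish the two inclusions separately, both resting on the gate property (alluded to in the section's opening).

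First I would show that $\delta(\CR,\CR')$ is contained in a single $(W_P, W_{P'})$-double coset. Fix any base alcoves $\fkx \in \CR, \fkx' \in \CR'$ and set $w = \delta(\fkx, \fkx')$. For arbitrary $\fky \in \CR$ and $\fky' \in \CR'$, I would apply the gate property twice. On the one hand, let $\fkp = \mathrm{proj}_\CR(\fky')$; the gate property yields $\delta(\fky,\fky') = \delta(\fky,\fkp)\,\delta(\fkp,\fky')$ with $\delta(\fky,\fkp) \in W_P$ (since $\fky,\fkp \in \CR$). Varying $\fky$ over $\CR$ makes $\delta(\fky,\fkp)$ range over all of $W_P$, so $W_P\,\delta(\fky,\fky')$ depends only on $\fky'$. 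On the other hand, applying the gate property to $\CR'$ with base alcove $\fky$ in the same manner shows $\delta(\fky,\fky')\,W_{P'}$ depends only on $\fky$. Combining these two statements shows that $W_P\,\delta(\fky,\fky')\,W_{P'}$ is independent of both $\fky$ and $\fky'$, so $\delta(\CR,\CR') \subseteq W_P w W_{P'}$.

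For the reverse inclusion, I would work inside a single apartment. Given any $u \in \delta(\CR,\CR')$, realized as $u = \delta(\fky,\fky')$ for some $\fky \in \CR$, $\fky' \in \CR'$, choose an apartment $\mathcal A$ containing both alcoves (using the standard building axiom) and identify $\mathcal A$ with $W_a$ by sending $\fky \leftrightarrow e$, so that $\fky' \leftrightarrow u$. Since $P$-residues are the equivalence classes under the relation ``$\delta(\cdot,\cdot) \in W_P$'', inside $\mathcal A$ we have $\CR \cap \mathcal A = W_P$ and $\CR' \cap \mathcal A = u W_{P'}$. For any $v \in W_P$ and $v' \in W_{P'}$, the pair of alcoves corresponding to $v$ and $uv'$ lies in $\CR \times \CR'$ and has Weyl distance $v^{-1} u v' \in W_P u W_{P'}$. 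Thus $W_P u W_{P'} \subseteq \delta(\CR, \CR')$. Taking $u = w$ (which lies in $\delta(\CR,\CR')$ by construction) gives $W_P w W_{P'} \subseteq \delta(\CR,\CR')$, completing the proof.

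The main (mild) obstacle is to be careful about the formulation of the gate property in an affine/Euclidean building: one wants not only existence of the projection but the reduced-product statement $\delta(\fkx, \fky) = \delta(\fkx, \fkp)\,\delta(\fkp, \fky)$ for $\fkp = \mathrm{proj}_\CR(\fky)$, $\fkx \in \CR$. This is exactly \cite{AB}, Prop.~5.34, applied in both directions. The second inclusion instead requires only the characterization of residues as equivalence classes together with the apartment axiom, both of which are foundational facts from \cite{AB} that may simply be cited.
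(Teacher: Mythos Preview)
The paper does not supply its own proof of this lemma; it simply cites \cite{AB}, Lemma~5.29, prefaced by ``one easily checks''. Your argument is correct, and your treatment of the reverse inclusion (passing to a common apartment and reading off the double coset there) is essentially the standard one.

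For the forward inclusion, however, you invoke the gate property (\cite{AB}, Prop.~5.34), which in \cite{AB} is stated and proved \emph{after} Lemma~5.29. The original argument is more elementary and uses only the $W$-metric axioms: if $\fky,\fky_1\in\CR$ are $s$-adjacent for some $s\in P$, then for any alcove $\fky'$ one has $\delta(\fky_1,\fky')\in\{1,s\}\cdot\delta(\fky,\fky')$; iterating along a gallery of type in $P$ from $\fky$ to any alcove of $\CR$ gives $\delta(\CR,\fky')\subseteq W_P\,\delta(\fky,\fky')$, and the symmetric argument on the right yields the double-coset containment. In the context of the present paper there is no genuine circularity, since both 5.29 and 5.34 are imported as black boxes from \cite{AB}, but your route is heavier than necessary and would be circular if one were rebuilding the theory from scratch.
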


\begin{example}
For $P\subset\tSS$ with associated parahoric subgroup $\brP$, we can consider the quotient $\brP/\brI$ as a set of alcoves, and this set is a $P$-residue. If $\mathcal R$ is a residue of type $P$, and $g\in G(\brF)_1$, then $g\mathcal R$ is also a residue of type $P$ (since the action of $G(\brF)_1$ on the building is type-preserving). All residues of type $P$ have the form $g \brP/\brI$ for some $g\in G(\brF)_1$.
\end{example}

A key property of residues is the following proposition, called the gate property. The main ingredient in its proof, see loc.~cit., is the following fact: For $w\in\tW$ and $P, P'\subseteq \tSS$, there exists a unique element of minimal length in the double coset $W_PwW_{P'}$ which we denote by $\min(W_PwW_{P'})$; see~\cite{AB} Prop. 2.23.

\begin{proposition}{\rm (\cite{AB}, Prop.~5.34)}
Let $\CR$ be a residue, and $\fkb$ an alcove. There exists a unique alcove $\fkg\in\CR$ such that
\[
\dist(\fkb, \fkg) = \min \{ \dist(\fkb, \fkx);\ \fkx\in \CR \}.
\]
The alcove $\fkg$ is called the gate from $\fkb$ to $\CR$ and is denoted by $\proj_\CR(\fkb)$. It has the following properties:
\begin{enumerate}
\item
$\delta(\fkb, \fkg)$ is the unique element of minimal length in $\delta(\fkb, \CR)$.    
\item
For all alcoves $\fkx\in\CR$, we have
\[
\delta(\fkb, \fkx) = \delta(\fkb, \fkg) \delta(\fkg, \fkx).
\]
\item
For all alcoves $\fkx\in\CR$, we have
\[
\dist(\fkb, \fkx) = \dist(\fkb, \fkg) +  \dist(\fkg, \fkx).
\]
\end{enumerate}
\end{proposition}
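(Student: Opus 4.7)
The plan is to translate the statement into a Coxeter-theoretic question about left cosets via the Weyl distance $\delta$, and then invoke \cite{AB} Prop.~2.23 on unique minimal-length representatives of double cosets.

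First I would fix an arbitrary reference alcove $\fkx_0 \in \CR$, set $v := \delta(\fkb, \fkx_0)$, and let $P$ be the type of $\CR$. The principal step is to establish that the Weyl distance map
\[
\delta(\fkb, -)\colon \CR \longrightarrow vW_P
\]
is a bijection. Containment of the image in $vW_P$ uses that, by definition of a $P$-residue, $\delta(\fkx_0, \fkx) \in W_P$ for every $\fkx \in \CR$; concatenating a minimal gallery from $\fkb$ to $\fkx_0$ with one from $\fkx_0$ to $\fkx$ inside $\CR$ identifies $\delta(\fkb, \fkx) = v \cdot \delta(\fkx_0, \fkx)$. Surjectivity follows because $\fkx \mapsto \delta(\fkx_0, \fkx)$ parametrizes $\CR$ by $W_P$. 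Injectivity is the subtle point: if $\fkx, \fky \in \CR$ had equal Weyl distance from $\fkb$, one chooses an apartment containing $\fkb$ together with the relevant alcoves of $\CR$ and uses that Weyl distances inside a single apartment recover the alcoves uniquely.

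Next, by \cite{AB} Prop.~2.23 applied to the left coset $vW_P$ (viewed as the double coset $W_\emptyset \cdot v \cdot W_P$), there is a unique element $w^* \in vW_P$ of minimal length, and the standard minimal-coset-representative property gives $\ell(w^* u) = \ell(w^*) + \ell(u)$ for every $u \in W_P$. Define $\fkg \in \CR$ to be the unique preimage of $w^*$ under the bijection just established. Since $\dist = \ell \circ \delta$, this $\fkg$ uniquely minimizes $\dist(\fkb, -)$ on $\CR$, which simultaneously gives existence, uniqueness, and property~(1).

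Finally, for any $\fkx \in \CR$ set $u := \delta(\fkg, \fkx) \in W_P$. Applying the bijection of the first step with $\fkg$ in place of $\fkx_0$ identifies $\delta(\fkb, \fkx)$ with $w^* u = \delta(\fkb, \fkg)\,\delta(\fkg, \fkx)$, which is~(2); taking lengths and using the length-additivity of $w^*$ across $W_P$ yields $\dist(\fkb, \fkx) = \dist(\fkb, \fkg) + \dist(\fkg, \fkx)$, which is~(3). The main obstacle is the bijection in the first step, specifically its injectivity, where the apartment-system axioms of the Bruhat--Tits building enter essentially; the rest of the argument is a direct translation through $\delta$.
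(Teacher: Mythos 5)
Your central claim---that $\delta(\fkb,-)\colon \CR \to vW_P$ is a bijection---is false in any thick building, which is the situation of this paper. Injectivity already fails for a panel: if $\CR$ has type $P=\{s\}$, it contains at least three alcoves, and \emph{every} alcove of $\CR$ other than the gate $\fkg$ has the same Weyl distance $w^*s$ from $\fkb$; more generally the fibres of $\delta(\fkb,-)$ over the non-minimal elements of $vW_P$ are large. The apartment argument you sketch for injectivity cannot repair this: two alcoves always lie in a common apartment, but the three alcoves $\fkb,\fkx,\fky$ in general do not. A second, related defect is the identity $\delta(\fkb,\fkx)=\delta(\fkb,\fkx_0)\,\delta(\fkx_0,\fkx)$ for an \emph{arbitrary} reference alcove $\fkx_0\in\CR$: the concatenation of a minimal gallery from $\fkb$ to $\fkx_0$ with one from $\fkx_0$ to $\fkx$ need not be minimal, and for a non-reduced gallery type the Weyl distance is not the product of the letters; all that is true in general is $\delta(\fkb,\fkx)\in\delta(\fkb,\fkx_0)W_P$. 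The product formula holds precisely when $\fkx_0$ is the gate, i.e.\ it \emph{is} statement (2) of the proposition, so when you later ``apply the bijection with $\fkg$ in place of $\fkx_0$'' to get (2) and (3), the argument has become circular.

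What is sound in your plan is the role of the unique minimal-length coset representative; note that the paper itself gives no proof but cites \cite{AB} Prop.~5.34, remarking only that this representative (Prop.~2.23 of \cite{AB}) is the key ingredient. The standard argument runs in the opposite direction to yours: choose $\fkg\in\CR$ minimizing $\dist(\fkb,-)$ on $\CR$ and set $w^*=\delta(\fkb,\fkg)$. Using the building axiom that for $s\in P$ there is an alcove $\fkx\in\CR$ with $\delta(\fkg,\fkx)=s$ and $\delta(\fkb,\fkx)=w^*s$, minimality of $\dist(\fkb,\fkg)$ forces $\ell(w^*s)>\ell(w^*)$ for all $s\in P$, so $w^*$ is the minimal-length element of $w^*W_P$ and $\ell(w^*u)=\ell(w^*)+\ell(u)$ for all $u\in W_P$. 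One then proves (2) and (3) by induction on $\ell(\delta(\fkg,\fkx))$ along a minimal gallery inside $\CR$, using the axiom that $\delta(\fkb,\fky)=w$ and $\delta(\fky,\fky')=s$ with $\ell(ws)=\ell(w)+1$ imply $\delta(\fkb,\fky')=ws$. Uniqueness of the gate and statement (1) then come out of (3) as a consequence, rather than being inputs derived from a (false) injectivity claim.
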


We obtain a ``projection'' $\proj_\CR$ from the set of all alcoves in $\mathcal B(G, \brF)$ to $\CR$ by mapping each alcove $\fkb$ to the gate from $\fkb$ to $\CR$. With the notation of the proposition, we can interpret part (2) as saying that for every $\fkx\in \CR$, there exists a minimal gallery from $\fkb$ to $\fkx$ passing through $\fkg$. Note however that usually there will also exist other minimal galleries from $\fkb$ to $\fkx$ not passing through $\fkg$.

\begin{remark}\label{rmk-gate-rational}
The uniqueness of the gate from $\fkb$ to $\CR$ implies that every automorphism which preserves $\fkb$ and $\CR$ also fixes the gate from $\fkb$ to $\CR$. In particular, if $\fkb$ and $\CR$ are defined over $F$ (i.e., stable under all Galois automorphisms), then so is the gate.
\end{remark}

We will also need the following generalization of the gate property:

\begin{proposition}{\rm (\cite{AB}, Prop.~5.37)}\label{AB-5.37}
Let $\CR$ be a $P$-residue, and $\CR'$ a $P'$-residue for subsets $P, P'\subseteq \tSS$. Let $w_1 = \min(\delta(\CR, \CR'))$, $\CR_1 = \proj_{\CR}(\CR')$, $\CR'_1 = \proj_{\CR'}(\CR)$.
\begin{enumerate}
\item
The set $\CR_1$ is a residue of type $P \cap w_1 P' w_1^{-1}$, and likewise $\CR'_1$ is a residue of type $w_1^{-1}Pw_1 \cap P'$.
\item
The maps $\proj_{\CR'|\CR_1}\colon \CR_1\to \CR'_1$ and $\proj_{\CR|\CR'_1}\colon \CR_1'\to \CR_1$ are inverse to each other and hence induce a bijection $\CR_1 \cong \CR'_1$.
\item
Let $\fkx\in\CR_1$. The alcove $\fkx'\in\CR'_1$ corresponding to $\fkx$ under the bijection in (2) is the unique alcove in $\CR'_1$ such that $\delta(\fkx, \fkx') = w_1$.
\end{enumerate}
\end{proposition}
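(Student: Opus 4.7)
The plan is to combine the gate property (Prop.~5.34 of~\cite{AB}, stated above) with the standard parametrization of parabolic double cosets in Coxeter groups, bootstrapping everything from a single pair of witness alcoves. I would begin by choosing $\fkx_0\in\CR$ and $\fky_0\in\CR'$ with $\delta(\fkx_0,\fky_0) = w_1$; such a pair exists because $w_1\in\delta(\CR,\CR')$. Since $\delta(\fky_0,\CR) = w_1^{-1}W_P$ is contained in the double coset $W_{P'}w_1^{-1}W_P$, whose minimum-length element is $w_1^{-1}$ (as $w_1$ was chosen minimum-length in $W_Pw_1W_{P'}$), the gate property forces $\fkx_0 = \proj_\CR(\fky_0)$; symmetrically $\fky_0=\proj_{\CR'}(\fkx_0)$. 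So $\fkx_0\in\CR_1$ and $\fky_0\in\CR'_1$.

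For part (1), I would parametrize $\CR'$ via Weyl distances from $\fky_0$: each $\fky\in\CR'$ corresponds to a unique $v\in W_{P'}$ with $\delta(\fky_0,\fky) = v$, so $\delta(\fkx_0,\fky) = w_1v$ and $\delta(\fky,\CR) = v^{-1}w_1^{-1}W_P$. The gate $\proj_\CR(\fky)$ is then determined by the minimum-length element of the right coset $v^{-1}w_1^{-1}W_P$. Using the standard fact that every element of $W_Pw_1W_{P'}$ has a unique factorization $u w_1 v'$ with $u\in W_P$ a minimum-length representative of $W_P/W_{P\cap w_1 P'w_1^{-1}}$, $v'\in W_{P'}$, and $\ell(u w_1 v') = \ell(u)+\ell(w_1)+\ell(v')$, I would deduce that $\delta(\fkx_0,\proj_\CR(\fky))$ lies in $W_{P\cap w_1P'w_1^{-1}}$. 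Conversely, for any $u\in W_{P\cap w_1P'w_1^{-1}}$, the alcove $\fky\in\CR'$ with $\delta(\fky_0,\fky) = w_1^{-1}uw_1\in W_{P'}$ projects to the alcove $\fkx\in\CR$ satisfying $\delta(\fkx_0,\fkx) = u$. This identifies $\CR_1$ with the $(P\cap w_1P'w_1^{-1})$-residue through $\fkx_0$; the statement for $\CR'_1$ follows by symmetry.

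For part (2), the same Weyl-distance computation shows that for $\fkx\in\CR_1$ with $\delta(\fkx_0,\fkx) = u\in W_{P\cap w_1P'w_1^{-1}}$ one has $\proj_{\CR'}(\fkx)\in\CR'_1$ and $\delta(\fkx,\proj_{\CR'}(\fkx)) = w_1$; then applying $\proj_\CR$ to $\proj_{\CR'}(\fkx)$ returns $\fkx$, so the two projection maps restrict to mutually inverse bijections. Part (3) is immediate from this last formula: the alcove $\proj_{\CR'}(\fkx)\in\CR'_1$ is exactly the unique alcove $\fkx'\in\CR'_1$ satisfying $\delta(\fkx,\fkx') = w_1$, because $w_1$ is forced as the minimum-length element of the coset $\delta(\fkx,\CR')$ picked out by the gate property.

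The main obstacle is the bookkeeping around the double-coset decomposition and the additivity of length. Concretely, the key combinatorial lemma to verify is that when the factorization $u w_1 v'$ is extracted from $w_1 v$ via the decomposition above, the resulting $u$ automatically lies in $W_{P\cap w_1P'w_1^{-1}}$ (not merely in $W_P$), and that $u w_1$ -- or rather its inverse, shifted by an appropriate element of $W_P$ -- is precisely the minimum-length element of the right $W_P$-coset one is analyzing. These facts are standard in the theory of parabolic subgroups of Coxeter groups, but marshalling them cleanly into the gate-theoretic framework is where the real work lies.
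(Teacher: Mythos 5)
The paper itself does not prove this proposition; it quotes it directly from Abramenko and Brown, \cite{AB} Prop.~5.37, so the comparison is really with the standard textbook argument. Your strategy---anchor everything on a single witness pair $(\fkx_0,\fky_0)$ with $\delta(\fkx_0,\fky_0)=w_1$, use the gate property (additivity of distances) to identify each as the gate of the other, and then reduce the rest to the Kilmoyer--Howlett combinatorics of the double coset $W_P w_1 W_{P'}$---is exactly the standard route, and the overall plan is sound.

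Two points need attention. First, and you flag this yourself: the normalization of the double-coset decomposition you quote (with $u$ a minimal-length representative of $W_P/W_{P\cap w_1 P'w_1^{-1}}$ and $v'\in W_{P'}$ unrestricted) does not extract the information you need. Applied to $w_1 v$ it simply returns $u=1$, $v'=v$, because $w_1$ is $P'$-reduced; this tells you nothing about $\min(W_P w_1 v)$. What you want is the other normalization: write $v = k v'$ with $k \in W_{w_1^{-1}Pw_1\cap P'}$ and $v'$ a minimal-length representative of that coset in $W_{P'}$, so that $w_1 v = (w_1 k w_1^{-1})\,w_1\,v'$ with lengths adding; then $\min(W_P w_1 v)=w_1 v'$ and the prefix $w_1 k w_1^{-1}$, which equals $\delta(\fkx_0,\proj_\CR(\fky))$, lies in $W_{P\cap w_1 P' w_1^{-1}}$ by the Kilmoyer conjugation lemma. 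Second, and not flagged: your argument for (1) shows that $\delta(\fkx_0,\proj_\CR(\fky))$ ranges over all of $W_{P\cap w_1P'w_1^{-1}}$, but in a thick building this does not by itself identify $\CR_1$ with the full $(P\cap w_1P'w_1^{-1})$-residue through $\fkx_0$: there could a priori be alcoves at the right Weyl distance from $\fkx_0$ which nonetheless fail to be gates. The computation you run in your part (2) actually closes this gap, provided you apply it to an \emph{arbitrary} $\fkx$ with $\delta(\fkx_0,\fkx)\in W_{P\cap w_1P'w_1^{-1}}$ rather than only to those already assumed to lie in $\CR_1$: it gives $\delta(\fkx,\proj_{\CR'}(\fkx))=w_1$, and then gate-additivity of distances forces $\proj_\CR(\proj_{\CR'}(\fkx))=\fkx$, hence $\fkx\in\CR_1$. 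So the ingredients are all present, but the logical order should put that computation before the conclusion of part (1).
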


\subsection{Affine Deligne-Lusztig varieties}\label{sec:adlv}

We fix a triple $(G, K, \mu)$, where $G$ is a connected semisimple group of adjoint type over $F$, as above, $K\subset \tSS$ and $\mu\in\Lambda$ is a translation element of the corresponding affine root system.

Attached to $\mu$, we have the $\mu$-admissible set
\[
\Adm(\mu) := \{ w\in\tW;\ \text{there exists } v\in W_0: w\in \varepsilon^{v(\mu)} \}.
\]

From $K\subset \tSS$ we obtain the parahoric subgroup $\brK\subset G(\brF)$, and in case $K$ is stabilized by $\sigma$, a rational parahoric subgroup $\mathcal K\subset G(F)$. We denote the corresponding partial affine flag variety by $\Flag_K$ (this is the ind-scheme representing the sheaf quotient of the loop group of $G$ by the positive loop group scheme corresponding to $\brK$). For $K=\emptyset$, $\brK$ is the standard Iwahori subgroup $\brI$ which we fixed by fixing a base alcove in the standard apartment. We write $\Flag$ for $\Flag_{\emptyset}$. In the case of mixed characteristic, the notion of ind-scheme here has to be understood in the setting of perfect schemes, see~\cite{Zhu}, \cite{Bhatt-Scholze}.

The \emph{affine Deligne-Lusztig variety} attached to $w\in \tW$ and $b\in G(\brF)$ is
\[
X_w(b) = \{ g\in G(\brF)/\brI;\  g^{-1}b\s(g)\in \brI w\brI \}.
\]
More precisely, the right hand side denotes the set of $\overline{\kappa}$-valued points of a unique reduced locally closed subscheme of the affine flag variety $\Flag$, which we also denote by $X_w(b)$. As before, in the mixed characteristic case, $X_w(b)$ is a perfect scheme. Then $X_w(b)$ is (perfectly) locally of finite type over $\overline{\kappa}$ and is finite-dimensional. 

We also consider the following variants: For $K\subseteq \tSS$ and $w\in W_K\backslash \tW/W_K$, we have
\[
X_w(b) = \{ g\in G(\brF)/\brK;\  g^{-1}b\s(g)\in \brK w\brK \},
\]
where again $b\in G(\brF)$. Often it is better to consider the space
\[
X(\mu, b)_K = \{ g\in G(\brF)/\brK;\ g^{-1}b\s(g)\in \bigcup_{w\in \Adm(\mu)} \brK w\brK \},
\]
a finite disjoint union of usual affine Deligne-Lusztig varieties which is more directly related to the corresponding Rapoport-Zink space if the group-theoretic data arises from a suitable (local) Shimura datum, see~\cite{rapoport:guide}.

In the case considered in \cite{CV}, $K = \tSS\setminus \{0\}$ corresponds to a hyperspecial parahoric group and $\mu$ is minuscule, and then $X(\mu, b)_K$ is an affine Deligne-Lusztig variety in the usual sense, because the union $\bigcup_{w\in \Adm(\mu)} \brK w\brK$ is equal to the $\brK$-double coset $\brK \varepsilon^\mu \brK$. In the general case, we will formulate the results in Section 3 for the union $X(\mu, b)_K$, as in \cite{GH}, but it will be clear from the statements and proofs that we could also consider the individual affine Deligne-Lusztig varieties in the union $X(\mu, b)_K$ separately.

\begin{definition}
Let $b\in G(\brF)$.
We denote by $\JJ_b$ the $\sigma$-centralizer of $b$:
\[
\JJ_b(R) = \{ g\in G(R\otimes_F \brF);\ g^{-1} b \sigma(g) = b \}
\]
(where $R$ is any $F$-algebra). This defines an algebraic group $\JJ_b$ over $F$. We usually fix $b$ and write $\JJ = \JJ_b(F)$.
\end{definition}

The element $b$, or its $\sigma$-conjugacy class, is called \emph{basic}, if the algebraic group $\JJ$ is an inner form of $G$. See~\cite{kottwitz-isoI}, \cite{Rapoport-Richartz}, \cite{GHN2} Section 1.2 for a more detailed discussion.

\section{The $\JJ$-stratification}

\subsection{Definition of the $\JJ$-stratification}

To state the definition of the \CVstrat{}, we introduce the relative position map:

\begin{definition}\label{def-relative-position}
Let $K\subset \tSS$, and let $\brK$ be the corresponding parahoric subgroup. We denote by $\inv_{K}$ the \emph{relative position map}
\[
\inv_K \colon G(\brF) \times G(\brF) \to \brK \backslash G(\brF) \backslash \brK \cong W_K \backslash \tW /W_K,\quad (g,h) \mapsto \brK g^{-1}h \brK,
\]
where the identification $\brK \backslash G(\brF) \backslash \brK \cong W_K \backslash \tW /W_K$ comes from the ``parahoric Iwahori-Bruhat decomposition'' for $\brK$. For $K=\emptyset$, $\brK=\brI$, this is the usual Iwahori-Bruhat decomposition, for $\brK$ hyperspecial, it is the Cartan decomposition. See~\cite{Haines-Rapoport} Prop.~8.
\end{definition}

The map $\inv_K$ factors through $\Flag\times \Flag$ (and even through $\Flag_K\times \Flag_K$).

\begin{remark}
Note that the relative position map is closely related to the Weyl distance $\delta$ defined above. In fact, if $G(\brF)_1$ denotes the kernel of the Kottwitz homomorphism, then the action of $G(\brF)$ on the building $\breve{\mathcal B}$ gives us a commutative diagram
\[
\xymatrix{
G(\brF)_1 \times G(\brF) \ar[d]\ar[r]^>>>>>>{\inv} & \tW \ar[d] \\
{\rm Alc}(\breve{\mathcal B})\times {\rm Alc}(\breve{\mathcal B}) \ar[r]^>>>>>\delta & W_a,
}
\]
where ${\rm Alc}(\breve{\mathcal B})$ denotes the set of alcoves in $\breve{\mathcal B}$, the vertical map on the left is $(g, h)\mapsto (g\fka, h\fka)$, and the vertical map on the right is the projection $\tW = W_a\rtimes \{ w\in\tW;\ \ell(w)=0\} \to W_a$. We need to restrict the values in the first factor of the top left corner to $G(\brF)_1$, because the action of $G(\brF)$ is not type-preserving in general.
Projecting to the double quotients by $W_K$, we get a similar diagram for $\inv_K$ and $\delta_K$, $K\subset\tSS$.
\end{remark}

\begin{definition}{\rm (\cite{CV} Section 2)}
Fix $b\in G(\brF)$ and denote by $\JJ_b$ its $\s$-centralizer, $\JJ:=\JJ_b(F)$.
For $K\subset \tSS$ and a family $\mathbf w = (w_j)_{j\in \JJ}$, we let
\[
S_\JJ(\mathbf w) = \{ x\in G(\brF)/\brK;\ \forall j\in \JJ: \inv_K(j, x) = w_j \}.
\]
The strata of the $\JJ$-stratification (of $G(\brF)/\brK = \Flag_K(\overline{\kappa})$) are those sets $S_\JJ(\mathbf w)$ which are non-empty. By intersecting $S_\JJ(\mathbf w)$ with $X(\mu,b)_K$, we obtain the $\JJ$-stratification on $X(\mu, b)_K$.
\end{definition}

As explained in \cite{CV} Section 2, it follows from Theorem~\ref{thm-finiteness} below that each stratum is equal to the set of $\overline{\kappa}$-valued points of a (unique) reduced locally closed subscheme of $\Flag_K$, and we usually identify the set of $\overline{\kappa}$-valued points and this scheme.

\subsection{A finiteness property of the $\JJ$-stratification}\label{sec:finiteness-property}

We start with some general preparations which are independent of the group $\JJ$.

\begin{definition} 
Let $\mathfrak x$ be an alcove in the apartment $\breve{\mathscr A}$. We denote by  $\mathscr D_R(\mathfrak x)$ the set of walls of $\fkx$ separating $\fkx$ from the base alcove $\fka$.
\end{definition}

So the elements if $\mathscr D_R(\mathfrak x)$ are the affine root hyperplanes which intersect the closure of $\mathfrak x$ in a face of codimension $1$. Their types in $\tSS$ form the right descent set $D_R(\mathfrak x)$ of $\mathfrak x$ (considered as an element of the affine Weyl group).

Recall that in our terminology the maximal simplices in the affine building are called alcoves, while the notion of chamber is used for the finite Weyl chambers, i.e., the chambers of the spherical building given by our choice of origin.

If $v$ is any vertex in the standard apartment $\breve{\mathscr A}$, the collection of affine root hyperplanes passing through $v$ is a (finite) root system in the vector space with origin $v$ obtained from $\breve{\mathscr A}$. We call this the root system obtained by change of origin to $v$. Note that if $v$ is not special, then this root system will have a different Dynkin type. Cf.~\cite{GHN2} 5.7.

\begin{lemma}
Let $\fkx$ be an alcove in the standard apartment $\breve{\mathscr A}$, and let $H$ be an affine root hyperplane which is a wall of $\fkx$. Let $v$ be a vertex of $\fkx$ not lying in $H$. Let $C$ be the chamber opposite to the chamber containing $\fkx$ with respect to the root system given by change of origin to $v$.

Let $\mathscr H$ be a finite set of affine root hyperplanes in $\breve{\mathscr A}$.

Then there exists a translation $\lambda\in \Lambda$ with the following property:
For all alcoves $\fky = \varepsilon^\mu \fkx$ (for some $\mu\in \Lambda$) such that $H$ is a wall of $\fky$ and such that $\fky$ is not contained in $\varepsilon^\lambda C$,
\[
\mathscr D_R(\fky) \not\subseteq \mathscr H.
\]
\end{lemma}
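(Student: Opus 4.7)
The plan is to parametrize the alcoves $\fky = \varepsilon^\mu \fkx$ having $H$ as a wall by a translation vector $\mu$, derive from $\mathscr D_R(\fky) \subseteq \mathscr H$ a cone constraint on $\mu$, and identify that cone with (a translate of) $C$.

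First I would label the walls of $\fkx$ as $H_1 = H = H_{\alpha, k}$ and $H_i = H_{\alpha_i, k_i}$ for $i = 2, \dots, d+1$, where $d = \dim V$. Since the vertex $v$ of $\fkx$ lies on every wall of $\fkx$ except $H$, one has $\langle \alpha_i, v\rangle = k_i$ for $i \geq 2$. Let $\ep_i \in \{\pm 1\}$ be the sign such that $\fkx \subseteq \{x : \ep_i(\langle \alpha_i, x\rangle - k_i) > 0\}$. The requirement that $H$ remains a wall of $\fky$ forces $\langle \alpha, \mu\rangle = 0$, after which the walls of $\fky$ are $H$ together with $\varepsilon^\mu H_i = H_{\alpha_i,\, k_i + \langle \alpha_i, \mu\rangle}$ for $i \geq 2$.

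Next I would establish two key observations about each such $H_i$. First, because $\fka$ is contained in a fixed bounded region around the origin, there is a constant $B$ (depending only on $\fka$ and the $\alpha_i, k_i$) such that $\varepsilon^\mu H_i \in \mathscr D_R(\fky)$ whenever $\ep_i \langle \alpha_i, \mu\rangle > B$. Second, the finiteness of $\mathscr H$ means that the condition $\varepsilon^\mu H_i \in \mathscr H$ restricts $\langle \alpha_i, \mu\rangle$ to a finite set of integers, hence bounds $|\langle \alpha_i, \mu\rangle|$ by some constant depending on $\mathscr H$. Combining these, the hypothesis $\mathscr D_R(\fky) \subseteq \mathscr H$ forces $\ep_i \langle \alpha_i, \mu\rangle \leq M_i$ for every $i \geq 2$, for suitable constants $M_i = M_i(\mathscr H, \fkx, \fka)$. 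The wall $H$ itself contributes no $\mu$-dependent condition, since $\mu \in \ker\alpha$ preserves $H$ and the side on which $\fky$ lies.

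Now observe that $C = \{y \in V : \ep_i \langle \alpha_i, y - v\rangle < 0 \text{ for all } i \geq 2\}$, so these inequalities place $\mu$ in a translate of $\ov{C - v}$. The $\alpha_i$ for $i \geq 2$ are $d$ linearly independent functionals (being the walls through a vertex of a non-degenerate simplex), so I can pick $\lambda \in \Lambda$ with $\ep_i \langle \alpha_i, \lambda\rangle > M_i + K_i$ for every $i \geq 2$, where $K_i := \max_{x \in \ov{\fkx}} \ep_i \langle \alpha_i, x - v\rangle$ accounts for the diameter of $\fkx$. A direct computation with $y = x + \mu - \lambda$ then yields $\ep_i\langle\alpha_i, y - v\rangle < 0$ for all such $y$ and all $i \geq 2$, i.e.\ $\varepsilon^\mu \fkx \subset \varepsilon^\lambda C$, for every admissible $\mu$.

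The main obstacle is the geometric recognition step: one must see that the cone cut out by combining the finiteness of $\mathscr H$ with the criterion for walls to separate $\fky$ from $\fka$ agrees, up to translation, with the \emph{opposite} chamber $C$. Once this identification is made, the remaining work is elementary bookkeeping with the bounded sets $\fka$ and $\fkx$.
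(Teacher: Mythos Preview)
Your argument is correct and is, in essence, the contrapositive of the paper's: both use that the walls of $\fky$ through the translated vertex $v+\mu$ are the chamber walls at $v$ shifted by $\mu$, that such a wall lies in $\mathscr D_R(\fky)$ once $\ep_i\langle\alpha_i,\mu\rangle$ is large, and that membership in the finite set $\mathscr H$ caps $|\langle\alpha_i,\mu\rangle|$. The paper proceeds in two steps (first $\mathscr H=\{H\}$ with $\varepsilon^\lambda C\supset\fka$, then push $\lambda$ further for general $\mathscr H$), whereas you treat general $\mathscr H$ directly; the geometric content is identical.

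One minor caveat: your labeling of the walls as $H_1,\dots,H_{d+1}$ and the claim that $v$ lies on every wall of $\fkx$ except $H$ tacitly assume $\fkx$ is a simplex (connected affine Dynkin diagram). In the polysimplex case there are additional walls of $\fkx$ not through $v$, but since these do not bound the chamber $C$ they play no role in your inequalities, and the argument goes through verbatim once $H_2,\dots,H_{d+1}$ are taken to be precisely the $d$ walls of $\fkx$ passing through $v$.
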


Note that typically $\mathscr H$ will contain $H$ and $H\in \mathscr D_R(\fkx)$ (otherwise the statement is trivially satisfied). If the affine Dynkin diagram of $G$ is connected, then $v$ in the lemma is unique; but otherwise, alcoves will be products of simplices (polysimplices), and $v$ is not unique.

\begin{proof}
We first prove the lemma in the case that $\mathscr H = \{ H\}$, and will discuss the general case afterwards. In this case, we choose $\lambda$ such that $\varepsilon^\lambda C$ contains the base alcove $\fka$.

Let us check that the conclusion of the lemma is then satisfied.
Denote by $v'$ the vertex of $\fky$ obtained by translating $v$ by $\mu$, and denote by $C'$ the ``chamber'' with vertex $v'$ which contains $\fky$. (So up to translation $C$ and $C'$ are opposite to each other.)
Because $v'\not\in \varepsilon^\lambda C$, one of the walls of $\varepsilon^\lambda C$ separates $v'$ from the base alcove, say $H_{\alpha, k}$. Replacing $\alpha$ by $-\alpha$ and $k$ by $-k$, if necessary, we may assume that $k\ge 0$. Let $k'$ be such that $H_{\alpha, k'}$ passes through $v'$ (and hence is a wall of $\fky$). Then $H_{\alpha, k'}$ has ``larger distance'' to the base alcove than $H_{\alpha, k}$, i.e., $k' \ge k$, and $H_{\alpha, k'}$ separates $\fky$ from the base alcove (because up to translation the ``chambers'' $\varepsilon^\lambda C$ and the one with apex $v'$ and containing $\fky$ are opposite to each other). Thus $H_{\alpha, k'}\in \mathscr D_R(\fky)\setminus \{ H\}$.

We now discuss how to modify the choice of $\lambda$ in order to deal with the general case. By changing $\lambda$, we may replace the number $k$ in the above argument by any given larger number. The walls of the alcoves $\fky$ not in $\varepsilon^\lambda C$ which we produce will have the form $H_{\alpha, k'}$ with $k'\ge k$. For $k$ sufficiently large, no $H_{\alpha, k'}$ can be in $\mathscr H$. Choosing $\lambda$ which works simultaneously for all $\alpha$, it is then clear that we may ensure that the wall of $\fky$ we find is not in the set $\mathscr H$.
\end{proof}

\begin{proposition}\label{prop-finiteness}
Let $\mathscr H$ be a finite set of affine root hyperplanes in the apartment $\breve{\mathscr A}$. Then there are only finitely many alcoves $\fkx$ in this apartment with $\mathscr D_R(\fkx) \subseteq  \mathscr H$.
\end{proposition}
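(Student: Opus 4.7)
My plan is to induct on $|\mathscr{H}|$, using the preceding lemma as the main tool at each step.

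First, the base case $\mathscr{H}=\emptyset$: then $\mathscr{D}_R(\fkx)=\emptyset$, which forces $\fkx=\fka$. For the inductive step, pick any $H\in\mathscr{H}$ and split the set of alcoves to be counted into the subset $\{\fkx:\mathscr{D}_R(\fkx)\subseteq\mathscr{H}\setminus\{H\}\}$, which is finite by the inductive hypothesis, and the subset $\{\fkx:H\in\mathscr{D}_R(\fkx)\subseteq\mathscr{H}\}$, whose alcoves all abut $H$ on the side opposite $\fka$. The task reduces to bounding this second subset.

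Next, I would reduce to finitely many ``shapes.'' Let $\Lambda_H\subseteq\Lambda$ denote the rank-$(r-1)$ sublattice of translations parallel to $H$. The stabilizer of $H$ in $\tW$ acts on the alcoves abutting $H$ with finite quotient modulo $\Lambda_H$, so one can pick representatives $\fkx_1,\dots,\fkx_N$ (all on the side of $H$ opposite $\fka$) such that every alcove in the second subset equals $\varepsilon^\mu\fkx_j$ for some $j$ and some $\mu\in\Lambda_H$. For each $\fkx_j$, choose a vertex $v_j$ of $\fkx_j$ not on $H$ and let $C_j$ be the chamber at $v_j$ opposite to the one containing $\fkx_j$. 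Applying the preceding lemma to $(\fkx_j,H,v_j,\mathscr{H})$ yields a translation $\lambda_j\in\Lambda$ such that any $\Lambda_H$-translate $\fky=\varepsilon^\mu\fkx_j$ satisfying $\mathscr{D}_R(\fky)\subseteq\mathscr{H}$ must lie inside $\varepsilon^{\lambda_j}C_j$.

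The proof then concludes with the transversality observation that $\{\varepsilon^\mu\fkx_j:\mu\in\Lambda_H\}\cap\varepsilon^{\lambda_j}C_j$ is finite: all translates $\varepsilon^\mu\fkx_j$ share the same signed distance from $H$ (since $\mu$ is parallel to $H$), while $C_j$, being the chamber opposite to the one containing $\fkx_j$ at the vertex $v_j\notin H$, opens in a direction with nontrivial component transverse to $H$. The cross-section of $\varepsilon^{\lambda_j}C_j$ at the common $H$-level is therefore bounded inside an $(r-1)$-dimensional affine subspace, and so contains only finitely many $\Lambda_H$-translates of $\fkx_j$. Taking the union over the finitely many $\fkx_j$ gives finiteness of the second subset, completing the induction.

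The main obstacle is this final geometric step --- verifying that the shifted opposite chamber $\varepsilon^{\lambda_j}C_j$ really does cut the $\Lambda_H$-strip of $\fkx_j$-translates in a bounded cross-section, uniformly across the finite family of shape representatives. This transversality is exactly why the lemma is formulated in terms of the chamber opposite to $\fkx$ at a vertex away from $H$; once that input is secured, the remainder is bookkeeping between the induction and the finite set of shape representatives.
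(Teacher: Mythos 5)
Your overall strategy mirrors the paper's: both arguments reduce to controlling, for a fixed $H\in\mathscr H$, the alcoves $\fkx$ with $H\in\mathscr D_R(\fkx)\subseteq\mathscr H$; both group these into finitely many translation classes, using that the translations preserving $H$ form a full-rank lattice in $H$; and both invoke the preceding lemma to confine the relevant translates to shifted chambers with apex off $H$. Your induction on $|\mathscr H|$ replaces the paper's remark that $\mathscr D_R(\fkx)=\emptyset$ only for $\fkx=\fka$, but this is cosmetic.

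There is, however, a genuine gap in the final transversality step, and it is exactly the step you flag as the ``main obstacle.'' You choose a \emph{single} vertex $v_j\notin H$ per representative and assert that the cross-section of $\varepsilon^{\lambda_j}C_j$ at the common $H$-level is bounded because $C_j$ ``opens in a direction with nontrivial component transverse to $H$.'' Having a nontrivial transverse component is not enough; what one needs is that \emph{every} extreme ray of $C_j$ has nonzero pairing with the root $\alpha$ defining $H$. That holds when the affine Dynkin diagram is connected: then $v_j$ is the unique vertex of the simplex $\fkx_j$ off $H$, the extreme rays of the chamber at $v_j$ containing $\fkx_j$ point from $v_j$ to the other vertices (all on $H$), and so every extreme ray has nonzero $\alpha$-component; the opposite chamber $C_j$ inherits this. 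But this proposition is used (via Theorem~\ref{thm-finiteness}) for arbitrary $G$, where the affine Dynkin diagram may be reducible and the alcove a polysimplex; in that case $C_j$ has extreme rays lying entirely in the direction of the other simplex factors, hence parallel to $H$, and the cross-section is unbounded. Concretely, for $\widetilde A_1\times\widetilde A_1$ with $\fka=(0,1)^2$, $H=\{x=0\}$, $\fkx=(-1,0)\times(0,1)$ and $v=(-1,0)$, the shifted chamber is a quadrant $\{x<c_1,\ y<c_2\}$, which contains infinitely many of the translates $(-1,0)\times(n,n+1)$. The paper handles this by intersecting over \emph{all} choices of $v$ not on $H$ (one per simplex factor of the alcove); only that intersection, not any single $\varepsilon^\lambda C$, cuts the $\Lambda_H$-orbit down to a finite set. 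Your argument is repaired by doing the same: apply the lemma for each admissible vertex of $\fkx_j$ and require $\fky$ to lie in the intersection of the resulting shifted chambers.
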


\begin{proof}
Let $H\in \mathscr H$. Since $\mathscr H$ is finite and the set $\mathscr D_R(\fkx)$ is empty only if $\fkx$ is the base alcove, it is enough to show that there are only finitely many alcoves $\fkx$ with
\[
H\in \mathscr D_R(\fkx) \subseteq \mathscr H.
\]
Let $\mathcal M$ denote the set of all of these alcoves. Call two alcoves in $\mathcal M$ equivalent, if one is mapped to the other by some translation element. In view of the above lemma, it is enough to show that there are only finitely many equivalence classes for this equivalence relation. (Because, with the notation of the lemma, only finitely many alcoves which have $H$ as a wall lie in all the finitely many chambers $\varepsilon^\lambda C$ produced by the lemma for $\fkx$ and the different choices for $v$.)

This finiteness assertion follows from the fact that $H\cap \Lambda$ is a lattice of full rank in the space $H$: Hence a fundamental mesh for this lattice is bounded, and meets only finitely many alcoves.

In fact, every finite root is part of some system of simple roots for the root system. Say $H=H_{\alpha, k}$ for $\alpha\in \Phi$. Choosing a system of simple roots $\alpha = \beta_1, \beta_2, \dots, \beta_r$, and denoting the corresponding fundamental coweights by $\omega^\vee_1, \dots, \omega^\vee_r$, we find linearly independent points $k\omega^\vee_1 + \omega^\vee_i$, $i=2, \dots, r$, in $H\cap \Lambda$.
\end{proof}


\begin{corollary}\label{cor-separate-1}
Given a bounded subset $S \subset \breve{\mathcal B}$, there exists $c > 0$ such that for all alcoves $\fkj$ in $\breve{\mathcal B}$ with $\dist(\fkj, \fka) > c$, there exists an adjacent alcove $\fkj'$ of the alcove $\fkj$ such that every alcove in $S$ can be reached from $\fkj$ by a minimal gallery passing through $\fkj'$.
\end{corollary}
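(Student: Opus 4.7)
The plan is to combine Proposition~\ref{prop-finiteness} with a retraction argument in the building. Since $S$ is bounded, first set $R := \sup\{\dist(\fka, \fks) : \fks \text{ an alcove of } S\}$, which is finite. Let $\mathscr{H}$ be the (finite) set of affine root hyperplanes in $\breve{\mathscr A}$ that separate $\fka$ from some alcove within distance $R$ of $\fka$. Proposition~\ref{prop-finiteness} applied to $\mathscr{H}$ yields a constant $c$ such that every alcove $\fkx \in \breve{\mathscr A}$ with $\dist(\fkx, \fka) > c$ has some wall in $\mathscr{D}_R(\fkx) \setminus \mathscr{H}$.

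Given an alcove $\fkj$ of $\breve{\mathcal B}$ with $\dist(\fkj, \fka) > c$, I would fix an apartment $\mathcal{A}$ containing both $\fka$ and $\fkj$ and transport $\mathscr{H}$ to $\mathcal{A}$ via the canonical identification $\mathcal{A} \cong \breve{\mathscr A}$ fixing their intersection. By the choice of $c$, there is a wall $H$ of $\fkj$ in $\mathcal{A}$ separating $\fkj$ from $\fka$ with $H \notin \mathscr{H}$; since $H$ does not separate $\fka$ from any alcove of $\mathcal{A}$ within distance $R$ of $\fka$, it in fact separates $\fkj$ from all such alcoves. Define $\fkj'$ to be the alcove of $\mathcal{A}$ adjacent to $\fkj$ across $H$.

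To show that $\fkj'$ has the desired property, apply the retraction $\rho = \rho_{\mathcal{A}, \fkj'}$ onto $\mathcal{A}$ centered at $\fkj'$. This is $1$-Lipschitz for the gallery distance, preserves distances from $\fkj'$, and fixes $\mathcal{A}$ pointwise (in particular $\fka, \fkj, \fkj'$). For each alcove $\fks \in S$, the bound $\dist(\fka, \rho(\fks)) \leq \dist(\fka, \fks) \leq R$ places $\rho(\fks)$ in $\mathcal{A}$ within distance $R$ of $\fka$, hence on the same side of $H$ as $\fkj'$; so $\dist_{\mathcal{A}}(\fkj, \rho(\fks)) = \dist_{\mathcal{A}}(\fkj', \rho(\fks)) + 1$. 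Combining this with $\dist(\fkj, \fks) \geq \dist(\fkj, \rho(\fks))$ (1-Lipschitz, using $\rho(\fkj) = \fkj$) and $\dist(\fkj', \fks) = \dist_{\mathcal{A}}(\fkj', \rho(\fks))$ (retraction centered at $\fkj'$) gives $\dist(\fkj, \fks) \geq \dist(\fkj', \fks) + 1$; the reverse inequality is automatic from adjacency, so $\dist(\fkj', \fks) = \dist(\fkj, \fks) - 1$ and thus $\fkj'$ lies on a minimal gallery from $\fkj$ to $\fks$.

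The main subtlety is choosing the retraction centered at $\fkj'$ rather than at the more obvious $\fkj$: centering at $\fkj$ preserves distances from $\fkj$ but does not allow one to relate $\dist(\fkj', \fks)$ with $\dist_{\mathcal{A}}(\fkj', \rho(\fks))$, which blocks the crucial inequality above. Centering at $\fkj'$ makes everything drop out cleanly from $1$-Lipschitz combined with the apartment-level separation across $H$; the remainder is a direct combination of the combinatorial finiteness just established with standard retraction properties in buildings.
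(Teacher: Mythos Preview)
Your argument is correct and follows essentially the same strategy as the paper: apply Proposition~\ref{prop-finiteness} to a finite set $\mathscr H$ of hyperplanes determined by the bound on $S$, find a wall $H\in\mathscr D_R(\fkj)\setminus\mathscr H$, and conclude via a retraction onto an apartment containing $\fka$ and $\fkj$. The only notable difference is the choice of retraction center: the paper retracts onto $\mathscr A'$ with center $\fka$ and argues that a minimal gallery from $\fkj$ to an alcove of $S$ cannot leave $\mathscr A'$ before crossing $H$, whereas you retract with center $\fkj'$ and use the $1$-Lipschitz property together with exact preservation of distances from $\fkj'$ to obtain $\dist(\fkj,\fks)=\dist(\fkj',\fks)+1$ directly. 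Your variant is slightly cleaner and makes the role of the wall $H$ more transparent; the paper's version has the advantage (exploited later in the proof of Theorem~\ref{thm-finiteness}) that the whole construction visibly takes place inside a single apartment containing $\fka$, which is useful for the rationality argument there.
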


\begin{proof}
We may replace $S$ by a larger subset, so we may assume that $S$ is the set of all alcoves of distance $\le d_S$ to the base alcove $\fka$, for some $d_S$. Let $\fkj$ be an alcove with $\dist(\fkj, \fka)>d_S$, and let $\mathscr A'$ be an apartment containing $\fkj$ and the base alcove $\fka$. Every minimal gallery from $\fkj$ to an alcove in $S$ passes through an alcove $\fky$ in $\mathscr A'$ with $\dist(\fkj, \fky)=d_S$. In fact, consider such a minimal gallery. Looking at the retraction from the building onto $\mathscr A'$ fixing $\fka$ (cf.~\cite{AB} Def.~4.38), we see that as soon as this gallery leaves $\mathscr A'$, the distance to $\fka$ increases with each step. Therefore it is enough to find a wall of $\fkj$ such that the half-space of $\mathscr A'$ containing $\fkj$ contains none of the alcoves in $S$.

By the defining axioms of buildings (\cite{AB}~Def.~4.1), there is an isomorphism $\mathscr A' \cong \brsA$ fixing $\fka$ pointwise, we may just as well assume that $\mathscr A' = \brsA$. Let $c_1 \ge 1$ be so large that for all affine root hyperplanes $H_{\alpha, k}$ with $|k| > c_1$, all alcoves in $S$ contained in $\brsA$ lie on the same side of $H_{\alpha, k}$.

Now apply Prop.~\ref{prop-finiteness} in $\breve{\mathscr A}$ for the finite set $\mathscr H$ of affine root hyperplanes $H_{\alpha, k}$ with $|k| \le c_1$. Choose $c>d_S$ such that $\dist(\fkj, \fka) \le c$ for all $\fkj$ with $\mathscr D_R(\fkj)\subseteq \mathscr H$.
\end{proof}

\begin{theorem}\label{thm-finiteness}
Let $b\in G(\brF)$, let $\JJ_b$ be its $\s$-centralizer, and let $\JJ=\JJ_b(F)$.
Fix a parahoric level $K\subset \tSS$ and a quasi-compact subscheme $S\subset \Flag_K$ in the partial flag variety for $K$.

There exists a finite subset $J'\subset \JJ$ such that for all $g, g'\in S$ with
\[
\inv_K(j, g) = \inv_K(j, g')\ \text{for all } j\in J',
\]
we have
\[
\inv_K(j, g) = \inv_K(j, g')\ \text{for all } j\in \JJ.
\]
\end{theorem}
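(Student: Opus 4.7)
The plan is to work in three stages: reduce to the Iwahori level $K = \emptyset$, use Corollary~\ref{cor-separate-1} to reduce each $j \in \JJ$ far from $\fka$ to a representative close to $\fka$, and finally appeal to local finiteness of the $\JJ$-orbit of $\fka$ in $\mathcal{B}(G, \brF)$ to conclude.

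First, the reduction to $K = \emptyset$ is straightforward: the projection $\pi : \Flag \to \Flag_K$ has projective fibres $\brK/\brI$, so $\pi^{-1}(S)$ is quasi-compact, and $\inv_K(j, g) = W_K\,\inv(j, \tilde g)\,W_K$ for any lift $\tilde g$ shows a finite $J'$ separating Iwahori-level invariants does the job at parahoric level as well. I identify $\Flag$ with alcoves of $\mathcal{B}(G, \brF)$ modulo length-zero elements (which only shift $\inv$ by length-zero elements of $\tW$ and hence preserve the equivalence relation); then $\inv(j, g)$ becomes the Weyl distance $\delta(j\fka, g\fka) \in W_a$, and the bounded set $S \subset \Flag$ becomes a bounded set $S^{\mathrm{alc}}$ of alcoves.

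Second, letting $d_S$ bound $\dist(\fka, \cdot)$ on $S^{\mathrm{alc}}$, I apply Corollary~\ref{cor-separate-1} to $S^\sharp := S^{\mathrm{alc}} \cup \{\fky : \dist(\fky, \fka) \le d_S\}$. The corollary yields $c > 0$ such that every alcove $\fkj$ with $\dist(\fkj, \fka) > c$ has an adjacent alcove $\fkj_1$ (with $\dist(\fkj_1, \fka) = \dist(\fkj, \fka) - 1$) through which all minimal galleries from $\fkj$ to $S^\sharp$ pass. The gate property gives
\[
\delta(\fkj, \fkx) = \delta(\fkj, \fkj_1) \cdot \delta(\fkj_1, \fkx) \qquad \text{for all } \fkx \in S^{\mathrm{alc}}.
\]
Iterating, every alcove $\fkj$ yields an alcove $\fkj^*$ with $\dist(\fkj^*, \fka) \le c$ and an element $w_\fkj \in W_a$ (independent of $\fkx$) such that $\delta(\fkj, \fkx) = w_\fkj \cdot \delta(\fkj^*, \fkx)$ for all $\fkx \in S^{\mathrm{alc}}$; consequently, the equivalence relations $\sim_\fkj$ and $\sim_{\fkj^*}$ on $S^{\mathrm{alc}}$, defined by equality of Weyl distances to $\fkj$ resp.\ $\fkj^*$, coincide.

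Third, to finish I use that $\JJ$ acts on the building with open compact stabilizer $\JJ_\fka$, so $\JJ \cdot \fka = \JJ / \JJ_\fka$ is discrete. In the basic case one chooses $\fka$ to be $\sigma_\JJ$-invariant, placing $\JJ \cdot \fka$ inside the rational sub-building $\mathcal{B}(\JJ, F)$, which is locally finite (the residue field of $F$ is finite) and isometrically embedded in $\mathcal{B}(G, \brF)$; hence $\JJ \cdot \fka \cap \mathcal{C}$ is finite, where $\mathcal{C} := \{\fky : \dist(\fky, \fka) \le c\}$. By arranging the iteration above to remain in $\JJ \cdot \fka$---using the geodesic convexity of the rational sub-building and the theory of acute cones of Haines--Ng\^o---each $\fkj^*$ lies in this finite set, so only finitely many distinct equivalence relations $\sim_{\fkj^*}$ appear; combined with $\sim_\fkj = \sim_{\fkj^*}$, this makes $\{\sim_j : j \in \JJ\}$ a finite collection. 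Selecting one $j \in \JJ$ per equivalence class produces the required $J'$: if $g, g' \in S$ agree on $\inv(j', \cdot)$ for all $j' \in J'$, then they agree on every $\sim_{j'}$, hence on every $\sim_j$ for $j \in \JJ$, so $\inv(j, g) = \inv(j, g')$ for all $j \in \JJ$. The non-basic case reduces to the basic one via the Levi subgroup attached to the Newton point of $b$.

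The main obstacle is the compatibility of the gate-iteration with the $\JJ$-orbit: the adjacent alcove $\fkj_1$ supplied by Corollary~\ref{cor-separate-1} need not lie in $\JJ \cdot \fka$, and showing that it can be replaced by an alcove in $\JJ \cdot \fka$ with the same factorisation $\delta(\fkj, \fkx) = \delta(\fkj, \fkj_1) \cdot \delta(\fkj_1, \fkx)$ for all $\fkx \in S^{\mathrm{alc}}$ is the technical heart of the proof. This replacement is to be carried out using the combinatorics of the inclusion $\mathcal{B}(\JJ, F) \hookrightarrow \mathcal{B}(G, \brF)$ and the acute-cone machinery of Haines--Ng\^o.
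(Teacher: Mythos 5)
Your strategy aligns with the paper's for the first two stages: reduce to Iwahori level, translate to Weyl distances between alcoves, and iterate Corollary~\ref{cor-separate-1} to push the reference alcove $\fkj$ to one of bounded distance from $\fka$ while preserving the factorization $\delta(\fkj,\fkx)=\delta(\fkj,\fkj_1)\,\delta(\fkj_1,\fkx)$ for all $\fkx\in S^{\mathrm{alc}}$. You have also correctly identified where the difficulty lies: the alcove $\fkj_1$ produced by Corollary~\ref{cor-separate-1} is an $\brF$-alcove in an apartment containing $\fkj$ and $\fka$, and there is no reason it should be rational for $\JJ$, let alone lie in the $\JJ$-orbit of $\fka$. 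That is precisely the gap, and your sketch leaves it open, proposing only that some replacement argument using acute cones and the inclusion $\mathcal{B}(\JJ, F)\hookrightarrow\mathcal{B}(G,\brF)$ ought to handle it.

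The paper closes this gap not by fixing up $\fkj_1$ a posteriori, but by an additional reduction \emph{before} the iteration starts: one may enlarge $\JJ$. If a finite set $J'$ works for a larger group $J''\supseteq\JJ$ (or even just any finite subset of $G(\brF)$), then there are only finitely many families $(\inv(j,g))_{j\in\JJ}$ with $g\in S$, and those can then be separated by finitely many elements of $\JJ$ itself. Using this freedom, replace $\JJ=\JJ_b(F)$ by $G(F_s)$ for $s$ large enough that $\JJ_b$ base-changed to $F_s$ embeds in $G_{F_s}$; increasing $s$ further one arranges $\s^s$ to act trivially on $\tSS$ and on the standard apartment, i.e.\ the group is \emph{residually split}. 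In the residually split case, every $\brF$-alcove of a rational apartment is rational, so the alcove supplied by Corollary~\ref{cor-separate-1} automatically lies in the rational building, and the induction closes with no need for acute cones at this point. Without this enlargement step, your third stage does not go through as stated: in examples such as Example~\ref{example-rational-building}, the rational alcoves are a proper subset of the $\brF$-alcoves of a rational apartment, the adjacent alcove $\fkj_1$ typically fails to be rational, and the acute-cone machinery (which the paper uses only much later, in the proof of Theorem~\ref{main-thm}) does not obviously supply the required replacement. Your proposed reduction of the non-basic case to the basic case via the Levi of the Newton point is also different from (and less uniform than) the paper's unramified base change, and it is unnecessary once the enlargement trick is in place, since that trick makes no distinction between basic and non-basic $b$.
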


Equivalently, we can express the theorem as saying that there exist only finitely many families $(w_j)_{j\in\JJ}$ of the form $(\inv(j, g))_j$ with $g\in S$, i.e., only finitely many $\JJ$-strata intersect $S$.

For $G$ unramified, and $K=\tSS\setminus \{0\}$ hyperspecial, this is \cite{CV} Prop.~2.6. We give a new proof of this result which works for general $G$ and which proceeds solely in terms of the building of $G$.

\begin{proof}
We begin by some reduction steps which simplify the situation. Since the inverse image of $S$ in the full affine flag variety is again a quasi-compact subscheme, and the invariant $\inv = \inv_{\emptyset}$ for the Iwahori subgroup is finer than $\inv_K$, it is enough to prove the theorem in the case $K=\emptyset$.

Assume that we can show the existence of a finite set $J' \subseteq G(\brF)$ with the property of the theorem, but without requiring $J'\subset \JJ$. It still follows that there are only finitely many families $(\inv(j, g))_{j\in \JJ}$ with $g\in S$. These finitely many families can then be separated by finitely many suitably chosen elements of $\JJ$, so that we obtain the theorem as stated.

In particular, this implies that we may replace $\JJ = \JJ_b(F)$ by a larger group. For $s$ sufficiently large, the base change of $\JJ_b$ to the unramified extension $F_s$ of $F$ of degree $s$ is a subgroup of $G_{F_s}$ (more precisely, the centralizer of a $1$-parameter subgroup depending on $b$). It is therefore enough to prove the statement for $G(F_s)$ in place of $\JJ$. Increasing $s$ further, we may assume that $\s^s$ acts trivially on $\tSS$ and fixes the standard apartment $\breve{\mathscr A}$, i.e., $G_{F_s}$ is residually split. To ease the notation, we keep the notation $\JJ$ and assume that it is residually split over $F$.

Furthermore, it is easy to check that $\inv(j, g)=\inv(j, g')$ for all $j\in\JJ$ if this is true for all $j\in \JJ_1$. It follows that we can just as well replace
$S$ by the set $\{ s\fka;\ s\in S(\overline{\kappa})\}$ of alcoves in $\mathcal B(G, \brF)$, and $\inv(j, g)$ by the Weyl distance $\delta(j\fka, g\fka)$. It is then enough to show the existence of a finite set $J'$ of alcoves $\fkj$ in $\mathcal B(G, \brF)$ such that the values $\delta(\fkj, g\fka)$ for $\fkj\in J'$ determine all $\delta(\fkj, g\fka)$ for $\fkj$ in $\mathcal B(\JJ, F)$.

\medskip
\emph{Claim.} There exists a finite set $J'$  of alcoves in $\mathcal B(G, \brF)$ such that

\vspace{1ex}\par
\hfill\parbox{\dimexpr \textwidth-2cm}
{for every alcove $\fkj$ in $\mathcal B(\JJ, F)$ there exists an alcove $\fkj'\in J'$ such that for every $\fkx$ in $S$ there exists a minimal gallery from $\fkj$ to $\fkx$ passing through $\fkj'$.
}
\hfill\llap{(*)}\vspace{3ex}\par

The claim implies the theorem: for $\fkj\in \mathcal B(\JJ, F)$ and $\fkj'\in J'$ chosen with this property, and $\fkx\in S$,
\[
\delta(\fkj,\fkx)  = \delta(\fkj, \fkj') \delta(\fkj', \fkx),
\]
so if $\fkx, \fkx' \in S$ have $\delta(\fkj', \fkx) = \delta(\fkj', \fkx')$, it follows that $\delta(\fkj, \fkx)=\delta(\fkj, \fkx')$. Therefore $J'$ has the desired property.

To prove the claim, let $J'$ be the set of alcoves in $\mathcal B(\JJ, F)$ of distance $\le c$ to the base alcove, where $c$ is as in Cor.~\ref{cor-separate-1}. Then $J'$ is a finite set, because we can view it as a subset of the set of $\kappa$-valued points of some Schubert variety over the finite field $\kappa$. To show that $J'$ satisfies the above property (*), it is enough to prove that the alcove $\fkj'$ produced by the corollary is again a $\JJ$-rational alcove, i.e., lies in $\mathcal B(\JJ, F)$. Then we can inductively apply the corollary until we reach an alcove of distance $\le c$ to $\fka$. Looking at the proof of the corollary, we see that in our situation we can work with a rational apartment $\mathscr A'$ containing $\fkj$ and the base alcove. Since $\JJ$ is residually split by our previous reductions, every $\brF$-alcove in $\mathscr A'$ lies in $\mathcal B(\JJ, F)$, and we are done.
\end{proof}

\section{The Coxeter case}
\label{sec:three}

\subsection{The setting}\label{sec:coxeter-setup}

We begin by recalling some of the results of \cite{GH}. We start with a group $G$ over a local non-archimedean field $F$ as above, see Section~\ref{sec:notation}, a translation element $\mu\in\Lambda$ and a ``level structure'' $K\subset \tSS$. We assume that the affine Dynkin diagram of $G$ is connected.

Let $K = \tSS \setminus \{ v\}$ be a subset fixed by $\s$, i.e., the parahoric subgroup $\brK$ is a rational, maximal parahoric subgroup.

We are interested in the geometry of the spaces $X(\mu, b)_K$, as defined in Section~\ref{sec:adlv}, the most interesting case being the case where $b$ is basic.


For $w\in W_a$, we denote by $\supp(w)\subseteq \tSS$ the set of simple affine reflections occurring in a reduced expression for $w$. Note that this set is the same for each reduced expression; it is called the support of $w$. Let $\tau\in\tW$ be an element of length $\ell(\tau)=0$. Then $\tau$ acts on $\tSS$ by conjugation. We define the $\s$-support $\supps(w\tau)$ of $w\tau$ as the smallest $\t\s$-stable subset of $\tSS$ which contains $\supp(w)$, or equivalently, as the smallest $\t\s$-stable subset $P$ of $\tSS$ with $w \in W_P$.

We call an element $w\in \tW$ a $\s$-Coxeter (or twisted Coxeter) element (with respect to its $\sigma$-support), if exactly one simple reflection from each $\t\s$-orbit on $\supps(w)$ occurs in every (equivalently, some) reduced expression for $w$. Note that we do not require that every $\t\s$-orbit in $\tSS$ occurs in $w$; for example, the identity element is $\s$-Coxeter with respect to its $\s$-support. So there is a slight difference to the terminology of Section~\ref{sec:lusztig}.

Attached to these data, we have the set
\[
\EOcox:= \{ w\in \mathop{\rm Adm}(\mu) \cap {}^K\tW;\ \supps(w) \ne \tSS,\ \text{and } w\ \text{is $\s$-Coxeter for } \supps(w)\}. 
\]
Here EO stands for Ekedahl-Oort; the above set is the index set for the set of ``Ekedahl-Oort strata'' contained in the basic locus.

Now assume that the triple $(G, \mu, K)$ is ``of Coxeter type'' as defined in~\cite{GH} Section 5, i.e., we require that for $b$ basic,
\[
X(\mu, b)_K = \bigsqcup_{w\in \EOcox} \pi(X_w(b)),
\]
where $\pi\colon \Flag\to \Flag_K$ is the projection. This is a strong condition with a number of pleasant consequences. Loc.~cit.~Theorem 5.1.2 contains a complete classification of these triples. Also compare with \cite{GHN2}.

This implies that for $b$ non-basic, $X(\mu, b)_K$, if non-empty, is equal to a union of $\pi(X_w(b))$ for elements $w\in \Adm(\mu)\cap {}^K\tW$ which are $\sigma$-straight (see~\cite{GH} Section 1.4), and $\dim X_w(b) =0$.

For $b$ basic, $w\in \Adm(\mu)\cap {}^K\tW$, we have $X_w(b)\ne \emptyset$ if and only if $w\in \EOcox$; the resulting decomposition is discussed in more detail in the next section. As proved in \cite{GH}, we have a very explicit description of the elements in $\EOcox$ in terms of the affine Dynkin diagram of $G$. For our purposes, we note the following consequences: If for $w, w'\in \EOcox$ we have $\supps(w)=\supps(w')$, then $w=w'$. We identify the set of vertices of the affine Dynkin diagram with $\tSS$, and for $w\in \EOcox$ denote by $\Sigma_w\subset\tSS$ the set of vertices of the affine Dynkin diagram which are adjacent to $\supps(w)$ (but do not lie in $\supps(w)$). If $\supps(w)=\emptyset$, i.e., $\ell(w)=0$, we define $\Sigma_w$ to be the $\tau\sigma$-orbit of $v$. It is shown in loc.~cit., that $\Sigma_w$ determines $w$, and is either a single $\tau\sigma$-orbit, or the union of two $\tau\sigma$-orbits.

Instead of repeating further details from \cite{GH}, we spell out the above objects in two examples:

\begin{example}\label{example1}
We start with the case of a unitary group which splits over an unramified quadratic extension, with $\mu = \omega_1^\vee$ (``signature $(n-1, 1)$''), cf.~\cite{GH} 6.3. This is the case considered by Vollaard and Wedhorn~\cite{Vollaard-Wedhorn}. The affine Dynkin diagram of type $\tilde{A}_{n-1}$ is a circle with vertices $0$, $1$, \dots, $n-1$, on which $\sigma$ acts by fixing $0$, and exchanging $i$ and $n-i$ for $i=1, \dots, n-1$. (So if $n$ is even, $\sigma$ has another fix point at $n/2$.) As level structure, we choose $K = \tSS \setminus \{ 0\}$; the corresponding parahoric subgroup is hyperspecial.

Conjugation by $\tau$ acts by $i\mapsto i+1$ and $n-1\mapsto 0$, so the composition $\tau\sigma$ exchanges $0$ and $1$, $2$ and $n-1$, etc. Cf.~Example~\ref{example-rational-building} above (where $n=4$).

The elements in $\EOcox$ are the elements $\tau$, $s_0\tau$, \dots, $s_0 s_{n-1}\cdots s_{\lceil\frac{n+3}{2}\rceil}\tau$, and the corresponding sets $\Sigma_w$ are the $\tau\sigma$-orbits in $\tSS$, i.e., $\{0, 1\}$, \dots, $\{ \lceil\frac{n}{2}\rceil , \lceil\frac{n+1}{2}\rceil \}$.

In the figure, the situation for $n=9$, $w=s_0s_8\tau$, and $\Sigma_w = \{ 3, 7 \}$ (the two vertices marked with gray circles), $\supps(w) = \{ 8, 0, 1, 2\}$ (the vertices marked with black circles) is shown.

\begin{figure}
\centering
    
\begin{tikzpicture}[scale=2]
\draw[fill=black] 
(0.766044443118978, 0.6427876096865393)
      circle [radius=.05] node [label=$7$] {} --
(0.17364817766693041, 0.984807753012208)
      circle [radius=.05] node [label=$8$] {} --
(-0.4999999999999998, 0.8660254037844387)
      circle [radius=.05] node [label=$0$] {} --
(-0.9396926207859083, 0.3420201433256689)
      circle [radius=.05] node [label=$1$] {} --
(-0.9396926207859084, -0.34202014332566866)
      circle [radius=.05] node [left, label=$2$] {} --
(-0.5000000000000004, -0.8660254037844384)
      circle [radius=.05] node [label=$3$] {} --
(0.17364817766692997, -0.9848077530122081)
      circle [radius=.05] node [label=$4$] {} --
(0.7660444431189778, -0.6427876096865396)
      circle [radius=.05] node [left, label=$5$] {} --
(1.0, 0.0)
      circle [radius=.05] node [right, label=$6$] {} --
(0.766044443118978, 0.6427876096865393)
; 
\draw[fill=gray]
(0.766044443118978, 0.6427876096865393)
      circle [radius=.05]     
;
\draw[fill=gray]
(-0.5000000000000004, -0.8660254037844384)
      circle [radius=.05]     
;

\draw[fill=white]
(1.0, 0.0)
      circle [radius=.05]     
;
\draw[fill=white]
(0.17364817766692997, -0.9848077530122081)
      circle [radius=.05]     
;
\draw[fill=white]
(0.7660444431189778, -0.6427876096865396)
      circle [radius=.05]     
;



\end{tikzpicture}
\caption{Example 3.1 for $n=9$, $w=s_0s_8\tau$, $\Sigma_w= \{3, 7\}$.}
\end{figure}
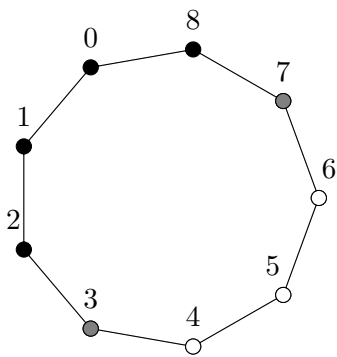
\end{example}

\begin{example}\label{example2}
Second, we give an example where one of the sets $\Sigma_w$ is the union of two $\tau\sigma$-orbits. This case arises from a unitary group which splits over a ramified quadratic extension, see~\cite{GH} 6.3, 7.1.2, 7.4.2 and~\cite{Rapoport-Terstiege-Wilson} for an analysis of the Rapoport-Zink space pertaining to this case.

In this case, the affine Dynkin diagram is of type $\tilde{B}_m$ with vertices $0$, $1$, \dots, $m$, and $\sigma$ acts by  exchanging $0$ and $1$, and fixing all other vertices. Conjugation by $\tau$ also exchanges $0$ and $1$ and fixes the other vertices, so that $\tau\sigma = \id$.

The level $K$ in this example is given by $\tSS \setminus \{m\}$. It corresponds to a non-special rational maximal parahoric subgroup.

Let $\mu =\omega_1^\vee$. The set $\EOcox$ consists of $\tau$, $s_m \tau$, \dots, $s_ms_{m-1}\cdots s_2\tau$, $s_ms_{m-1}\cdots s_2s_1\tau$, $s_ms_{m-1}\cdots s_2s_0\tau$. The corresponding $\s$-supports are just the usual supports ($\emptyset$, $\{ m\}$, \dots, $\{ 2, \dots, m\}$, $\{ 1, 2, \dots, m\}$, $\{ 0, 2, \dots, m\}$).

The sets $\Sigma_w$ are the sets $\{i\}$ for $i=0, \dots, m$, and the set $\{0, 1\}$ (the latter case for $m=6$ is shown in the figure; $w=s_6s_5s_4s_3s_2\tau$,  $\Sigma_w = \{ 0, 1 \}$ (the two vertices marked with gray circles), $\supps(w) = \{ 2, 3, 4, 5, 6 \}$ (the vertices marked with black circles).

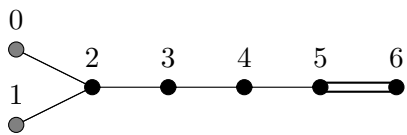
\begin{figure}[h]
\centering
\begin{tikzpicture}
\draw[fill=black] 
(3,0)                         
circle [radius=.1] node [label=$6$] {};

\draw[fill=black] 
(2,0)                         
      circle [radius=.1] node [label=$5$] {} --
(1,0)                         
      circle [radius=.1] node [label=$4$] {} --
(0,0)                         
circle [radius=.1] node [label=$3$] {} --
(-1,0) 
circle [radius=.1] node [label=$2$] {}
;

\draw[fill=black] 
(-1,0)  --
(-2,0.5);
\draw[fill=black] 
(-1,0)  --
(-2,-0.5)
; 

\draw[fill=gray]  (-2, 0.5)    circle [radius=.1] node [label=$0$] {}; 
\draw[fill=gray]  (-2, -0.5)    circle [radius=.1] node [label=$1$] {};


\draw[thick] (2, 0.06) -- +(1,0);
    \draw[thick] (2, -0.06) -- +(1,0);

\end{tikzpicture}
\caption{Example 3.2 for $m=6$, $w=s_6 s_5 s_4 s_3 s_2\tau$, $\Sigma_w= \{0, 1\}$.}
\end{figure}
\end{example}

Now we return to the general case.

\emph{The choice of representative of the $\sigma$-conjugacy class.}
The most interesting among the spaces $X(\mu, b)_K$ is the one where the $\s$-conjugacy class of $b$ is basic.

In this case, we choose as a representative (an arbitrary lift of) the unique length $0$ element $\tau\in \tW$ such that $\varepsilon^\mu\in W_a\tau$. Equivalently, $\tau$ is the unique length $0$ element such that $X(\mu, \tau)_K$ is non-empty. This is the choice made in \cite{GH}, as well, and the choice suggested as a reasonable one in \cite{CV} Remark 2.1. (As explained there, the $\JJ$-stratification varies significantly if a different representative, and correspondingly, a different group $\JJ$, is used. But note that also the treatment in \cite{GH}, e.g.~Prop.~2.2.1 there, relies on the use of the particular representative $\tau$.)

\begin{lemma}
For $w\in\tW$, any two lifts of $w$ to $N(\brF)\subset G(\brF)$ are $T(\brF)_1$-$\sigma$-conjugate and hence the $\JJ$-stratification is independent of the choice of lift.
\end{lemma}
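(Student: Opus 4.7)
Let $\dot w_1,\dot w_2\in N(\brF)$ be two lifts of $w$. They differ by an element of $T(\brF)_1$: write $\dot w_2=t_0\dot w_1$ with $t_0\in T(\brF)_1$. Solving $t^{-1}\dot w_1\sigma(t)=\dot w_2$ for $t\in T(\brF)_1$ is equivalent to solving the Lang-type equation
\[
L_\phi(t):=t^{-1}\phi(t)=t_0,
\]
where $\phi\colon T(\brF)_1\to T(\brF)_1$ is the group automorphism $\phi(t)=\dot w_1\sigma(t)\dot w_1^{-1}$. This is well-defined: $\dot w_1\in N(\brF)$ normalizes $T(\brF)$ and preserves its unique parahoric $T(\brF)_1$ (characterized as $\Ker\kappa_T$), and $\sigma$ likewise preserves $T(\brF)_1$.

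The crucial step is the surjectivity of $L_\phi$, which is a Lang--Steinberg statement for a pro-algebraic group. We have $T(\brF)_1=\CT^\circ(\CO_{\brF})$, with $\CT^\circ$ the identity component of the N\'eron model of $T$, and we filter by congruence subgroups: the bottom quotient is the connected algebraic $\overline\kappa$-group $\CT^\circ(\overline\kappa)$, and each higher graded piece is an iterated extension of copies of $\BG_a$ over $\overline\kappa$. After passing to a sufficiently large finite unramified extension over which $T$ splits and $\sigma^s$ fixes $w$, a suitable iterate of $\phi$ acts on each graded piece as a geometric Frobenius composed with an inner automorphism, so $\phi$ restricts to a Steinberg endomorphism at every level. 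The classical Lang--Steinberg theorem then yields surjectivity of $L_\phi$ modulo each congruence level, and profinite completeness of $T(\brF)_1$ assembles these partial solutions into a genuine $t\in T(\brF)_1$ with $L_\phi(t)=t_0$.

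For the ``hence'' clause, given such $t$, one verifies directly the identities
\[
(t^{-1}g)^{-1}\dot w_2\,\sigma(t^{-1}g)=g^{-1}\dot w_1\sigma(g),\qquad (t^{-1}jt)^{-1}\dot w_2\,\sigma(t^{-1}jt)=t^{-1}\bigl(j^{-1}\dot w_1\sigma(j)\bigr)\sigma(t),
\]
so left multiplication $g\mapsto t^{-1}g$ defines a bijection $X(\mu,\dot w_1)_K\to X(\mu,\dot w_2)_K$ of subvarieties of $\Flag_K$, while $j\mapsto t^{-1}jt$ defines a bijection $\JJ_{\dot w_1}(F)\to\JJ_{\dot w_2}(F)$. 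Since $T(\brF)_1\subseteq\brI\subseteq\brK$, we have $\brK t^{-1}=\brK$, and therefore
\[
\inv_K(t^{-1}jt,\,t^{-1}g)=\brK\, t^{-1}j^{-1}g\,\brK=\brK\, j^{-1}g\,\brK=\inv_K(j,g).
\]
This pair of bijections intertwines the two $\JJ$-stratifications, yielding the claimed independence of the choice of lift.

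The main obstacle is the Lang--Steinberg surjectivity of $L_\phi$ on the pro-algebraic group $T(\brF)_1$ for the twisted Frobenius $\phi$; once this is granted, the remaining assertions reduce to the elementary computations above together with the inclusion $T(\brF)_1\subseteq\brK$.
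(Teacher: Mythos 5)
Your proof is correct and takes essentially the same route as the paper: the paper likewise reduces the $\sigma$-conjugacy of two lifts to the surjectivity of the Lang map $t\mapsto t^{-1}\dot w\sigma(t)\dot w^{-1}$ on $T(\brF)_1$, citing Greenberg's theorem on schemes over local rings (\cite{Greenberg-II}, Prop.~3) where you re-derive the pro-algebraic Lang--Steinberg argument by hand, and then appeals to the observation (\cite{CV} Remark~2.1) that $\brK$-$\sigma$-conjugate elements yield the same $\JJ$-stratification, which you verify directly via the maps $g\mapsto t^{-1}g$ and $j\mapsto t^{-1}jt$. One small caveat in your sketch of Greenberg's result: $T$ need not split over any unramified extension (it may be ramified), but this is immaterial --- the congruence graded pieces are still connected commutative algebraic groups over $\overline\kappa$ on which $\phi$ is a Steinberg endomorphism, so the argument goes through.
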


\begin{proof}
We use Lang's theorem for the parahoric subgroup $T(\brF)_1$ of $T(\brF)$ (equipped with a ``twisted'' Frobenius $\mathop{\rm Int}(w)\circ \s$, where $w$ is a fixed lift), see~\cite{Greenberg-II}, Prop.~3. We obtain that the map $T(\brF)_1\to T(\brF)_1$, $g\mapsto g^{-1} w\sigma(g)w^{-1}$ is surjective. But an element $t$ is in the image if and only if $tw$ is $\s$-conjugate under $T(\brF)_1$ to $w$.


As pointed out in \cite{CV} Remark 2.1, elements $b_1, b_2\in G(\brF)$ which are $\s$-conjugate under $\brK$ give rise to the same $\JJ$-stratification, in the following sense: for $b_2 = k^{-1} b_1 \s(k)$, the isomorphism $X(\mu, b_2)_K \to X(\mu, b_1)_K$, $g\mapsto kg$ identifies $\JJ_{b_2}$-strata with $\JJ_{b_1}$-strata. This is clear using the isomorphism $\JJ_{b_2}\to \JJ_{b_1}$, $j\mapsto kjk^{-1}$.
\end{proof}

\subsection{The \BTstrat}\label{sec:bt-stratification}

As above, let $\pi\colon \Flag \to \Flag_K$ be the projection. By definition of the notion \emph{of Coxeter type} in \cite{GH} (right before Theorem 5.1.2) we obtain a decomposition of $X(\mu, \tau)_K$ as a disjoint union
\[
X(\mu, \tau)_K = \bigsqcup_{w\in \EOcox} \pi(X_w(\tau))
\]
Further, each $\pi(X_w(\tau))$ can be decomposed as
\[
\pi(X_w(\tau)) = \bigsqcup_{i\in \JJ/(\JJ\cap \brP_{\tSS\setminus \Sigma_w})} iY(w),
\]
with
\[
Y(w) = \pi(\{ g\in \brP_{\supps(w)}/\brI;\ g^{-1}\tau\s(g) \in \brI w\brI \}).
\]
The resulting stratification of $X(\mu, \tau)_K$,
\[
X(\mu, \tau)_K = \bigsqcup_{w\in\EOcox}\quad \bigsqcup_{i\in \JJ/(\JJ\cap \brP_{\tSS\setminus \Sigma_w})} iY(w)
\]
is called the \emph{Bruhat-Tits stratification} of $X(\mu, \tau)_K$.
Here $\brP_{\tSS\setminus \Sigma_w}$ denotes the parahoric subgroup of $G(\brF)$ attached to $\tSS\setminus \Sigma_w \subset \tSS$. Since we identify $\JJ(\brF) = G(\brF)$, we can intersect it with $\JJ = \JJ(F)$.

The term Bruhat-Tits stratification is used because the index set can be described in terms of the Bruhat-Tits building for $\JJ$. In the most common case where $\Sigma_w$ is a single $\tau\sigma$-orbit, $\JJ\cap \brP_{\tSS\setminus \Sigma_w}$ is the stabilizer of the corresponding vertex of the base alcove in the rational building of $\JJ$ and hence $\JJ/(\JJ\cap \brP_{\tSS\setminus \Sigma_w})$ can be identified with the set of all vertices of ``type $\Sigma_w$''. The Bruhat-Tits stratification is a stratification in the strict sense that the closure of each stratum is a union of strata. In fact, the closure relations between strata can be described explicitly in terms of the building, see~\cite{GH} Section 7.

Note that $Y(w)$ is (isomorphic to) a classical Deligne-Lusztig variety. Indeed, the projection $\pi$ induces an isomorphism
\[
\{ g\in \brP_{\supps(w)}/\brI;\ g^{-1}\tau\s(g) \in \brI w\brI \} \cong Y(w)
\]
since $w$ is a $\s$-Coxeter element (with respect to $\supps(w)$). The quotient $\brP_{\supps(w)}/\brI$ can be identified with the finite-dimensional flag variety for the maximal reductive quotient $G_w$ of the reduction of the parahoric subgroup $\brP_{\supps(w)}$ (with respect to its Borel subgroup induced by $\brI$). Equipping $G_w$ with the Frobenius $x\mapsto \tau\sigma(x)\tau^{-1}$, we find that $Y(w)$ is defined by the usual Deligne-Lusztig condition for the Weyl group element $w\tau^{-1}\in W_{\supps(w)}$. The Dynkin diagram of $G_w$ is given by the part of the affine Dynkin diagram with vertices $\supps(w)$, and the Frobenius action on this Dynkin diagram is given by $\tau\sigma$. The finite Weyl group of $G_w$ can be identified with $W_{\supps(w)}$ and the element $w\t^{-1}$ is a twisted Coxeter element in this group. For instance, in the situation of Example~\ref{example1}, we obtain classical Deligne-Lusztig varieties in a special unitary group.

\begin{theorem}\label{main-thm}
Let $(G, \mu, K)$ be of Coxeter type, and let $\tau\in \tW$ be the length $0$ element representing the unique basic $\sigma$-conjugacy class such that $X(\mu, \tau)_K\ne\emptyset$. Then the Bruhat-Tits stratification on $X(\mu, \tau)_K$ coincides with the $\JJ$-stratification.
\end{theorem}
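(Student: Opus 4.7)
The plan is to prove the two inclusions separately: first that each BT stratum is contained in a single $\JJ$-stratum, and second that distinct BT strata lie in distinct $\JJ$-strata. By $\JJ$-equivariance of both stratifications, the analysis reduces to working inside the strata $Y(w)$ rather than arbitrary translates $iY(w)$.

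For the first inclusion, let $g, g' \in Y(w)$ and $j \in \JJ$. I would analyze $\inv_K(j, g)$ via the gate property (\cite{AB}, Prop.~5.34) applied to the residue $\CR$ of type $\supps(w)$ based at the base alcove $\fka$: writing $\fkg := \proj_\CR(j\fka)$, we have
\[
\delta(j\fka, g\fka) = \delta(j\fka, \fkg) \cdot \delta(\fkg, g\fka),
\]
and likewise for $g'$, where the first factor depends only on $j$. Lusztig's Proposition~\ref{prop-coxeter-dlv}, applied to the classical Deligne-Lusztig variety $Y(w)$ attached to the twisted Coxeter element $w\tau^{-1}$ in the finite reductive quotient $G_w$ (equipped with its induced Frobenius $\mathop{\rm Int}(\tau)\circ\s$), forces both $g\fka$ and $g'\fka$ to lie in the open Schubert cell of $\CR$ opposite to $\fka$, so in particular $\delta(\fka, g\fka) = \delta(\fka, g'\fka) = w_{0,\supps(w)}$. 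Combining this rigidity with the coarsening from $\inv$ to $\inv_K$ (which absorbs the residual variation of the second factor within the relevant piece of $W_{\supps(w)}$), one concludes that $\inv_K(j, g) = \inv_K(j, g')$.

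For the second inclusion, suppose $g \in iY(w)$ and $g' \in i'Y(w')$ with $(i, w) \ne (i', w')$; I must produce $j \in \JJ$ with $\inv_K(j, g) \ne \inv_K(j, g')$. There are two sub-cases: (a) $w = w'$ but the cosets $i(\JJ \cap \brP_{\tSS\setminus\Sigma_w})$ and $i'(\JJ \cap \brP_{\tSS\setminus\Sigma_w})$ differ, corresponding to distinct vertices (or pairs of vertices, when $\Sigma_w$ is a union of two $\tau\sigma$-orbits as in Example~\ref{example2}) of the rational Bruhat-Tits building $\CB(\JJ, F)$; and (b) $w \ne w'$, where the differing supports distinguish the strata at the level of residues. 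The essential tool in both cases is the theory of acute cones of Haines and Ng\^o~\cite{HN}, which parametrizes alcoves by a base vertex and a Weyl chamber direction and gives explicit control over minimal galleries. I would use it to choose $j \in \JJ$ with $j\fka$ far inside a carefully selected acute cone, so that the first wall crossed on a minimal gallery from $j\fka$ to $g\fka$ differs from that crossed on a minimal gallery to $g'\fka$, yielding distinct Weyl distances. The explicit classification of~\cite{GH} Theorem~5.1.2, describing $\EOcox$ and the shape of each $\Sigma_w$ in terms of the affine Dynkin diagram, provides the combinatorial input needed to make this selection uniform across each Coxeter-type family. The non-basic case reduces as indicated in Remark~\ref{non-basic-case}.

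The main obstacle is the second inclusion. The acute-cone construction supplies candidates for $j$, but one must verify in each family that the corresponding minimal gallery actually crosses a wall that distinguishes the two alcoves under the coarser invariant $\inv_K$, which can kill many natural candidate walls. Case (a) with $\Sigma_w$ a union of two $\tau\sigma$-orbits is the most delicate, since one must simultaneously separate two rational vertices; case (b) requires comparing gates into two different residues. Once the relevant combinatorial picture is laid out type by type using the classification, the local analysis on the affine root hyperplanes becomes manageable, but this step-by-step verification is where the technical weight of the proof lies.
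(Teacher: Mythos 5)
Your overall architecture---prove that BT refines $\JJ$ via the gate property and Lusztig, and that $\JJ$ refines BT by choosing $j\in\JJ$ far out in a well-chosen direction using Haines--Ng\^o's acute cones---is the same as the paper's, but both halves as written have gaps.

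In the first half, the step from $\delta(\fka,g\fka)=w_{0,\supps(w)}$ to the constancy of $\inv_K(j,g)$ does not go through as you describe. You write $\delta(j\fka,g\fka)=\delta(j\fka,\fkg)\cdot\delta(\fkg,g\fka)$ with $\fkg=\proj_\CR(j\fka)$, and then appeal to the coarsening from $\inv$ to $\inv_K$ to ``absorb the residual variation of the second factor within the relevant piece of $W_{\supps(w)}$.'' This cannot work in general: $\supps(w)$ is typically \emph{not} contained in $K=\tSS\setminus\{v\}$ (in Example~\ref{example1}, for instance, $v=0\in\supps(w)$), so there is no inclusion $W_{\supps(w)}\subseteq W_K$ that would let the right $W_K$-coset swallow the variation. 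Moreover, the residue $\CR$ is a building, not a thin Coxeter complex, so $\delta(\fkg,g\fka)$ is genuinely not determined by $\delta(\fka,\fkg)$ and $\delta(\fka,g\fka)$. The point you are missing is that there is \emph{no} residual variation at all: by Remark~\ref{rmk-gate-rational} the gate $\fkg$ is itself a $\JJ$-rational alcove in $\CR$, so one can apply Lusztig's Prop.~\ref{prop-coxeter-dlv} with $\fkg$ playing the role of the base Borel (the Deligne--Lusztig variety for a twisted Coxeter element lies in the open cell relative to \emph{any} rational Borel, by $G^\sigma$-equivariance / Lang's theorem), giving $\delta(\fkg,g\fka)=w_{0,\supps(w)}$ exactly for all $g\in Y(w)$. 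This already yields constancy of the Iwahori-level invariant $\inv(j,-)$, and hence of $\inv_K(j,-)$, without any appeal to the coarsening.

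In the second half, the guiding heuristic---pick $j$ far in an acute cone so that ``the first wall crossed differs''---is not what actually does the work, and as stated it would not survive the passage to $W_K$-double cosets (which is exactly what you flag as the difficulty). The paper's argument is structured differently: it applies Prop.~\ref{AB-5.37} to the two residues $\CR=\brP/\brI$ and $\CR'=j'\brP'/\brI$ to get $w_1=\min(\delta(\CR,\CR'))$ and matched rational alcoves $\fkb,\fkb'$; builds a minimal gallery $\fkh'\to\fkb'\to\fkb\to\fkj$ with $\fkj$ a rational alcove far from all $K$-walls (possible by Lemma~\ref{lemma-extend-gallery}, which uses the acute-cone machinery); reads off $\delta(\fkj,\fkh)=w_3w_{0,P}$ and $\delta(\fkj,\fkh')=w_3w_2w_1w_{0,P'}$ via the gate to $\CR$; and then, crucially, uses Lemma~\ref{lem-far-from-walls} to reduce $\delta_K(\fkj,\fkh)\ne\delta_K(\fkj,\fkh')$ to the single combinatorial statement $w_{0,P}w_2w_1w_{0,P'}\notin W_K$, i.e.\ $s_v\in\supp(w_{0,P}w_2w_1w_{0,P'})$. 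That statement is then proved by the subexpression lemma~\ref{lem-subexpr}, with a short case split depending on whether $j'Y(w')=Y(w')$ or not (and, inside the latter, whether $\Sigma'\cap P=\emptyset$, the exceptional situation being exactly the $\tilde B_m$ case of Example~\ref{example2}). Your proposal identifies the right toolkit but not the decisive reduction to a support condition in the Coxeter group, nor the subexpression lemma that settles it; a ``type-by-type'' gallery analysis as you sketch would be considerably heavier and it is not clear it would close.
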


The theorem will be proved in the following sections. In particular we see that in these special cases, the $\JJ$-stratification is a stratification in the strict sense and has many nice properties. In the case of $G=GSp_4$ with hyperspecial level structure, and in the case of Example 3.1, the Theorem was proved in \cite{CV}, 4.2.1 and 4.2.2, resp., by direct considerations in terms of Dieudonn\'e modules.

Note that in the case where $K$ is not assumed to be hyperspecial, $X(\mu, b)_K$ will in general be a disjoint union of several affine Deligne-Lusztig varieties. Of course, as a corollary we see that the statement of the theorem also holds for these usual affine Deligne-Lusztig varieties contained in $X(\mu, b)_K$.

\begin{remark}\label{non-basic-case}
If $b$ is not basic, then under the above assumptions, $X(\mu, b)_K$, if non-empty, is $0$-dimensional (see~\cite{GH} Thm.~5.2.1, cf.~also \cite{GHN2}, Theorem 2.3). The strata of the Bruhat-Tits stratification are 
the individual points of this scheme. More precisely, $X(\mu, b)_K$ is a union of the form $\bigcup_i \pi(X_{w_i}(b))$ for certain $\sigma$-straight elements $w_i\in\Adm(\mu)$, where $\pi\colon \Flag\to \Flag_K$ denotes the projection. The group $\JJ=\JJ_{b}(F)$ acts transitively on each $X_{w_i}(b)$, and hence on its image $\pi(X_{w_i}(b))$.

In particular, if the index set for the above union has only one element $w$, then $\JJ$ acts transitively. In this case it is immediate that the strata of the $\JJ$-stratification are the individual single points of $X(\mu, b)_K$, if we choose the element $w$ as the representative of the $\sigma$-conjugacy class $[b]$.

However, in certain cases, the index set for the above union has more than one element. In this case, the $\JJ$-action on $X(\mu, b)_K$ is not transitive (contrarily to what is claimed in \cite{GH} Thm.~B (2)). See \cite{GH} 6.4 and 6.6 for the two cases where this occurs. As one checks from the data given in loc.~cit., the index set then consists of exactly two elements $w_1$, $w_2$ with $w_2 = \tau w_1\tau^{-1}$, and conjugation by $\tau$ stabilizes the subset $K\subset \tSS$, and hence conjugation by $\tau$ stabilizes the parahoric subgroup $\brK$. We choose one of the $\sigma$-straight elements $w_1$, $w_2$ as the representative of the $\sigma$-conjugacy class of $b$. Since conjugation by $\tau$ stabilizes $\brK$, it is again easy to check that the $\JJ$-stratification obtained in this situation has as its strata the individual points of $X(\mu, b)_K$.
\end{remark}

\subsection{The BT-stratification is finer than the $\JJ$-stratification}

We start by proving the easier direction of Theorem~\ref{main-thm}. Recall that $\JJ$ is an inner form of $G$ since $\tau$ is basic. We fix an identification $G(\brF) = \JJ(\brF)$. We have the action of $\sigma$ on $G(\brF)$ with fixed points $G(F)$, and the twisted action by $\mathop{\rm Int}(\tau)\circ\sigma$, $x\mapsto \tau \sigma(x)\tau^{-1}$, with fixed points $\JJ$.

Given $w\in \EOcox$, we have $P:=\supps(w)$ and the corresponding parahoric subgroup $\brP$.

We need to show that for all $j\in \JJ$, the value $\inv_K(j, -)$ is constant on each BT stratum $j' Y(w)$. We may pass to Iwahori level and will show the stronger statement that $\inv(j, -)$ is constant on $j' Y(w) \subset j'\brP/\brI$.

Since $\inv(j, j'h) = \inv((j')^{-1}j, h)$, we may assume that $j' = 1$. Similarly, given $j\in \JJ$, there exists $\tilde{\jmath}\in \JJ_1$ such that $j\fka = \tilde{\jmath}\fka$; then conjugation by $j^{-1}\tilde{\jmath}$ preserves $\brI$ and so
$\brI j^{-1} h\brI = j^{-1} \tilde{\jmath} \brI \tilde{\jmath}^{-1}h \brI$. This allows us to restrict ourselves to the case that $j\in \JJ_1$.

Altogether we see that it is enough to show that for every alcove $\fkj$ in $\mathcal B(\JJ, F)$, $\delta(\fkj, \fkh)$ is independent of $\fkh \in Y(w)$, where we now consider the latter set as a set of alcoves in $\mathcal B(\JJ, \brF)=\mathcal B(G, \brF)$.

Now let $\fkg$ be the gate from $\fkj$ to the residue $\brP/\brI$ in the sense of Section~\ref{sec:gate}. It is an alcove inside the rational building $\mathcal B(\JJ, F)$ (Remark~\ref{rmk-gate-rational}), and hence Lusztig's result about Deligne-Lusztig varieties attached to twisted Coxeter elements, Proposition~\ref{prop-coxeter-dlv}, give us that $\delta(\fkg, \fkh) = w_{0, P}$, the longest element of $W_P$, for all $\fkh\in Y(w)$. We then have that
\[
\delta(\fkj, \fkh) = \delta(\fkj, \fkg)\delta(\fkg, \fkh) =  \delta(\fkj, \fkg) w_{0, P}
\]
is independent of $\fkh\in Y(w)$.

\subsection{Acute cones and extension of galleries}

We recall some definitions and results from \cite{HN} Section 5. The notion of acute cone and the discussion until and including Definition~\ref{def-acute-cone} applies to a fixed apartment $A$ (and the corresponding finite Weyl group $W_0$, set of positive roots $\Phi^+$, and the affine root hyperplanes $H_{\alpha, k}$, \dots) in some affine building. We will eventually apply it to the rational building of the group $\JJ$.

Let $w\in W_0$. An affine root hyperplane $H= H_{\alpha, k} (=H_{-\alpha, -k})$ defines two half-spaces in $A$. To distinguish between them, let us assume that $\alpha \in w(\Phi^+)$. Then we let $H^{w+} = \{ v\in A;\ \langle \alpha, v\rangle > k\}$ and call this the $w$-positive half-space. We call the other half-space the $w$-negative half-space and denote it by $H^{w-}$.

\begin{definition} (\cite{HN} Def.~5.2)
Let $w\in W_0$. We say that a non-stuttering gallery $\Gamma = (\fkb_0, \dots, \fkb_\ell)$ in $A$ \emph{goes in the $w$-direction}, if for all $i = 1, \dots, \ell$, denoting by $H_i$ the wall separating $\fkb_{i-1}$ and $\fkb_i$, the alcove $\fkb_i$ lies in $H_i^{w+}$.
\end{definition}

\begin{proposition}{\rm (\cite{HN} Lemma 5.3, Cor.~5.6)}\label{prop-gall-w-direction}
\begin{enumerate}
\item
Let $\Gamma$ be a gallery going in the $w$-direction ($w\in W_0$). Then $\Gamma$ is a minimal gallery, and every minimal gallery between the two end alcoves of $\Gamma$ goes in the $w$-direction.
\item
Let $\Gamma$ be a minimal gallery in $A$. Then there exists $w\in W_0$ such that $\Gamma$ goes in the $w$-direction.
\end{enumerate}
\end{proposition}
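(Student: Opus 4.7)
The plan is to use the standard characterization that a gallery from $\fkb_0$ to $\fkb_\ell$ is minimal if and only if it crosses each affine hyperplane separating $\fkb_0$ from $\fkb_\ell$ exactly once, and crosses no other hyperplane; modulo this characterization, both parts of the proposition reduce to combinatorial statements about sides of hyperplanes.

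For part (1), suppose $\Gamma = (\fkb_0, \dots, \fkb_\ell)$ goes in the $w$-direction with walls $H_1, \dots, H_\ell$. First I would show that the $H_i$ are pairwise distinct. Assume instead that $H_i = H_j$ for some $i < j$. Then $\fkb_i$ lies on the $w$-positive side of $H_i$, while the $w$-direction condition at step $j$ forces $\fkb_j$ on the $w$-positive and hence $\fkb_{j-1}$ on the $w$-negative side of $H_j = H_i$. So the gallery must cross $H_i$ back from positive to negative at some intermediate step $k$ with $i < k < j$; since adjacent alcoves on opposite sides of $H_i$ share a wall contained in $H_i$, that shared wall is $H_k$, so $H_k = H_i$ and $\fkb_k$ lies on the $w$-negative side of $H_k$, contradicting going in the $w$-direction at step $k$. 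A simple induction now shows $\fkb_0$ lies on the $w$-negative side of every $H_i$ and $\fkb_\ell$ on the $w$-positive side, so each $H_i$ separates the endpoints. With $\ell$ distinct separating walls, $\Gamma$ attains the minimum possible length and is thus minimal. Any other minimal gallery $\Gamma'$ between the same endpoints crosses exactly the same set of hyperplanes, and at each crossing must move from the $\fkb_0$-side (the $w$-negative side) to the $\fkb_\ell$-side (the $w$-positive side), so $\Gamma'$ also goes in the $w$-direction.

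For part (2), I would produce $w$ by a generic direction argument. Choose interior points $v_0 \in \fkb_0$ and $v_\ell \in \fkb_\ell$ such that $v_\ell - v_0$ avoids every linear wall $\{v : \langle \alpha, v \rangle = 0\}$, $\alpha \in \Phi$; this is possible because the Minkowski difference $\fkb_\ell - \fkb_0$ is open and therefore not contained in any finite union of hyperplanes. Then $v_\ell - v_0$ lies in a unique open Weyl chamber $w C^+$, so $\langle \beta, v_\ell - v_0 \rangle > 0$ for every $\beta \in w(\Phi^+)$. For each wall $H_i$ write $H_i = H_{\alpha_i, k_i}$ with the sign of $\alpha_i$ chosen so that $\alpha_i \in w(\Phi^+)$. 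Since $H_i$ separates $v_0$ from $v_\ell$ by minimality of $\Gamma$, and $\langle \alpha_i, v_0 \rangle < \langle \alpha_i, v_\ell \rangle$, we deduce $\langle \alpha_i, v_0 \rangle < k_i < \langle \alpha_i, v_\ell \rangle$, so $\fkb_\ell \subset H_i^{w+}$. Minimality of $\Gamma$ then forces $\fkb_i, \dots, \fkb_\ell$ all to the same side of $H_i$ as $\fkb_\ell$, giving $\fkb_i \in H_i^{w+}$ as desired.

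The main obstacle, in my view, is the first part: one has to spot the key geometric fact that if $\Gamma$ crosses the same wall $H$ twice, then some intermediate step must cross $H$ in the ``wrong'' direction with respect to $w$, and to set this up cleanly one needs the uniqueness of the wall shared by two adjacent alcoves. Once the distinct-walls argument is in place, everything else --- the separation claim, minimality, and the Weyl-chamber bookkeeping in part (2) --- follows by standard manipulations with sign conventions and Minkowski differences.
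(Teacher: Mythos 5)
Your proof is correct. Note that the paper itself gives no proof of this proposition: it is imported verbatim from Haines--Ng\^o (\cite{HN}, Lemma 5.3 and Cor.~5.6), and your argument is essentially the standard one underlying those results, namely reducing both statements to the characterization of minimal galleries as those crossing each hyperplane separating the end alcoves exactly once and no other hyperplane. The two auxiliary facts you use beyond that characterization are both valid: adjacent alcoves are separated only by the wall containing their common panel (a segment joining interior points crosses only that wall), so a gallery in the $w$-direction crosses pairwise distinct walls, each separating the endpoints with the start alcove on the $w$-negative side; and the Minkowski difference of two open alcoves is a nonempty open set, hence not contained in the finite union of linear root hyperplanes, which yields the chamber $wC^+$ and the element $w$ in part (2). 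The degenerate case of a gallery of length zero is vacuous, so nothing is missing.
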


\begin{corollary}\label{cor-concat-gall}
Let $w\in W_0$, and let $\Gamma_1$, $\Gamma_2$ be galleries which both go in the $w$-direction, and such that the final alcove of $\Gamma_1$ is the first alcove of $\Gamma_2$. Then the concatenation of $\Gamma_1$ and $\Gamma_2$ is minimal.
\end{corollary}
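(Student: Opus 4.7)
The plan is to apply Proposition~\ref{prop-gall-w-direction}(1) directly to the concatenation $\Gamma := \Gamma_1 * \Gamma_2$ itself. Thus the task reduces to verifying that $\Gamma$ is a non-stuttering gallery going in the $w$-direction; its minimality then follows immediately from the proposition.

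To check that $\Gamma$ is a gallery, note that consecutive alcoves in $\Gamma$ are either consecutive in $\Gamma_1$ or in $\Gamma_2$, and the junction consists of a single alcove (the final alcove of $\Gamma_1$, identified with the initial alcove of $\Gamma_2$). Non-stuttering is automatic inside each piece, since a step going in the $w$-direction crosses a genuine wall and hence connects two distinct alcoves; at the junction the alcove immediately following the shared one is the second alcove of $\Gamma_2$, which is distinct from the first. Finally, the $w$-direction condition is a purely local property of each individual step: since every step of $\Gamma$ is a step of either $\Gamma_1$ or $\Gamma_2$, and each of these goes in the $w$-direction by hypothesis, so does $\Gamma$.

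There is essentially no obstacle here; the content of the argument is simply the observation that ``going in the $w$-direction'' is preserved under concatenation because it is a pointwise condition on steps. One might be tempted to worry that the first step of $\Gamma_2$ re-crosses the wall just crossed by the last step of $\Gamma_1$, but this is automatically ruled out by the $w$-direction condition, since it would force the junction alcove to lie simultaneously on the $w$-positive and $w$-negative sides of that wall. In any case, such global compatibility is not needed for the argument, and the conclusion follows at once from Proposition~\ref{prop-gall-w-direction}(1).
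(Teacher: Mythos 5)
Your proof is correct and takes essentially the same route as the paper: reduce to checking that the concatenation goes in the $w$-direction, which is immediate since the condition is local to each step. The extra remarks on non-stuttering and wall re-crossing are harmless elaborations of the same argument.
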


\begin{proof}
By the proposition, it is enough so show that the concatenation of $\Gamma_1$ and $\Gamma_2$ again goes in the $w$-direction, and this is clear by definition.
\end{proof}

\begin{definition} (\cite{HN} Def.~5.4)\label{def-acute-cone}
Let $\fkb$ be an alcove in $A$, and let $w\in W_0$.

The \emph{acute cone} $C(\fkb, w)$ is the set of all alcoves $\fkb'$ in $A$ such that there exists a gallery from $\fkb$ to $\fkb'$ going in the $w$-direction.
\end{definition}


Now we will apply these general notions to the situation at hand. Below, when we say that a translation element $\varepsilon^\lambda$ in $\breve{\mathscr A}$ lies far from all $\brF$-walls, we mean that it lies far from all \emph{finite} root hyperplanes, in other words: $|\langle\alpha, \lambda \rangle| \gg 0$ for all $\alpha\in\Phi$. Similarly, we have the notion of an alcove lying far from all $\brF$-walls. (Another way to say this would be to say that the alcove lies in a ``very shrunken'' Weyl chamber.)

\begin{lemma}\label{lemma-translations-far-from-walls}
Suppose that the affine Dynkin diagram of $G$ is connected,
and that $\JJ$ has $F$-rank $> 0$.

There exists a $\JJ$-rational translation of the base alcove which is far from all $\brF$-walls.
\end{lemma}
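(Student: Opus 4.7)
The plan is to produce the desired translation as a rescaled regular element inside the lattice of $\JJ$-rational translations of $\fka$.

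First, I would identify that lattice concretely. Since $\sigma_\JJ = \Int(\tau)\circ\sigma$ is an affine automorphism of $\brsA$ with linear part $w_\tau \sigma$ (where $w_\tau \in W_0$ is the linear part of $\tau$, and $\sigma$ acts linearly thanks to our choice of a $\sigma$-fixed special vertex as origin), and since $\fka$ is $\sigma_\JJ$-stable because it is preserved by both $\sigma$ and the length-zero element $\tau$, the translate $\varepsilon^\lambda\fka = \fka+\lambda$ is $\sigma_\JJ$-stable if and only if $(w_\tau\sigma)(\lambda) = \lambda$. Setting
\[
\Lambda^\JJ := \{\lambda\in\Lambda : (w_\tau\sigma)(\lambda) = \lambda\}, \qquad V^\JJ := \Lambda^\JJ\otimes_{\BZ}\BR,
\]
we obtain a sublattice which is full-rank in $V^\JJ$, and $V^\JJ$ is the direction space of the rational apartment $\brsA^{\sigma_\JJ}\subset\brsA$. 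By hypothesis $\dim V^\JJ$ equals the $F$-rank of $\JJ$ and is positive.

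Next, I would produce a regular element of $\Lambda^\JJ$. For each $\alpha\in\Phi$ not vanishing identically on $V^\JJ$, the locus $\{v\in V^\JJ : \alpha(v)=0\}$ is a proper linear subspace of $V^\JJ$; the finite union of these subspaces cannot cover $V^\JJ$, and since $\Lambda^\JJ$ is full-rank in $V^\JJ$ it meets the complement. Choose $\lambda_0\in\Lambda^\JJ$ outside this union; then for any positive integer $N$ the element $\lambda := N\lambda_0$ lies in $\Lambda^\JJ$ and satisfies $|\langle\alpha,\lambda\rangle| = N\,|\langle\alpha,\lambda_0\rangle|$, which can be made as large as required.

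The main obstacle is to verify that in fact \emph{every} finite root $\alpha\in\Phi$ has nonzero restriction to $V^\JJ$, so that the construction above meets the conclusion of the lemma uniformly over $\alpha\in\Phi$. Equivalently, one must show that the $w_\tau\sigma$-stable sub-root system
\[
\Phi_0 := \{\alpha\in\Phi : \alpha|_{V^\JJ} = 0\}
\]
is empty; in root-theoretic terms, that the anisotropic kernel of the relative root system of the inner form $\JJ$ over $F$ is trivial. Because $\Phi$ is irreducible (the affine Dynkin diagram of $G$ is connected, so the finite Dynkin diagram is too) and the $F$-rank of $\JJ$ is positive, $\Phi_0$ is a proper $w_\tau\sigma$-stable sub-root system; ruling out all proper nonempty possibilities requires using the special structure of $\tau$ as a length-zero element, whose linear part $w_\tau$ comes from the finite group of automorphisms of the affine Dynkin diagram. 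This combinatorial root-theoretic input is the heart of the argument; once $\Phi_0 = \emptyset$ is established, the lattice element $\lambda$ constructed above yields the desired $\JJ$-rational translate $\varepsilon^\lambda\fka$ far from all $\brF$-walls.
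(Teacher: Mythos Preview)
Your setup is sound: identifying $\Lambda^\JJ=\{\lambda\in\Lambda:(w_\tau\sigma)(\lambda)=\lambda\}$ as the lattice of $\JJ$-rational translations is correct, and once one knows that no $\alpha\in\Phi$ vanishes identically on $V^\JJ$, your scaling argument produces the required element. But you have not proved that $\Phi_0=\emptyset$; you yourself label it ``the heart of the argument'' and only suggest it should follow from an analysis of proper $w_\tau\sigma$-stable sub-root systems of an irreducible $\Phi$. As written, that leaves the entire content of the lemma unestablished.

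The paper does not attempt any such classification. Instead it exhibits a single explicit vector in $V^\JJ$ pairing nontrivially with every root. Let $v$ be the barycenter of the $\brF$-base alcove $\fka$, and let $v'$ be the barycenter of the proper face of $\fka$ that serves as the origin of the $\JJ$-rational apartment (proper because $\mathrm{rk}_F\JJ>0$). Both points are $\sigma_\JJ$-fixed, so $v-v'\in V^\JJ$. The paper then asserts, as a ``simple computation'' using that $\fka$ is a genuine simplex (this is exactly where connectedness of the affine Dynkin diagram enters), that $\langle\alpha,v\rangle\ne\langle\alpha,v'\rangle$ for every $\alpha\in\Phi$. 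Granting this, $N(v-v')\in\Lambda^\JJ$ for $N$ sufficiently divisible, and it is far from all walls. This barycenter trick is both shorter and more uniform than the sub-root-system case analysis you propose; to complete your argument along the paper's lines, the missing step is to carry out this computation rather than to invoke the structure of the anisotropic kernel.
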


\begin{proof}
According to our setup, the $\JJ$-rational apartment contains the barycenter $v$ of the $\brF$-base alcove. Since $\mathop{\rm rk} _F \JJ >0$, the origin $v'$ of the $\JJ$-rational standard apartment is the barycenter of some proper facet of the $\brF$-base alcove. It follows from a simple computation (relying on the assumption that the affine Dynkin diagram is connected, i.e., that alcoves and their faces are simplices, not just ``polysimplices'') that for all roots $\alpha$, $\langle \alpha, v\rangle \ne \langle \alpha, v'\rangle$.

Therefore for $N$ sufficiently divisible, $v' + N(v-v')$ is a rational translation element which is far from all $\brF$-walls.
\end{proof}

\begin{lemma}\label{lemma-extend-gallery}
Suppose that the affine Dynkin diagram of $G$ is connected, and that $\mathop{\rm rk}_F\JJ > 0$.
Let $\Gamma = (\fkb_0, \fkb_1, \dots, \fkb_\ell)$ be a minimal gallery in $\breve{\mathcal B}$ such that $\fkb_\ell$ and $\delta(\fkb_0, \fkb_\ell)$ are fixed by $\sigma_\JJ$.
Then there exists a minimal gallery $\Gamma'$ in $\breve{\mathscr A}$ which starts in $\fkb_\ell$, ends in an alcove which is fixed by $\sigma_\JJ$ and which is far away from all $\brF$-walls, and such that the concatenation of $\Gamma$ and $\Gamma'$ is a minimal gallery.
\end{lemma}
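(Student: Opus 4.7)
The plan is to pass to the standard apartment $\brsA$ via the type-preserving retraction $\rho=\rho_{\brsA,\fkb_\ell}\colon\breve{\mathcal B}\to\brsA$ centered at $\fkb_\ell$, carry out the construction there using the theory of acute cones (Proposition~\ref{prop-gall-w-direction} and Corollary~\ref{cor-concat-gall}), and then transfer the resulting minimality back to $\breve{\mathcal B}$ via the $1$-Lipschitz property of $\rho$. Since $\rho$ preserves the Weyl distance $\delta(\fkb_\ell,-)$, the image $\ov\Gamma:=\rho(\Gamma)$ is a minimal gallery in $\brsA$ from $\ov\fkb_0:=\rho(\fkb_0)$ to $\fkb_\ell$. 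The hypothesis that $\delta(\fkb_0,\fkb_\ell)$ is $\s_\JJ$-fixed, together with the fact that an alcove of $\brsA$ is determined by its Weyl distance from $\fkb_\ell$, forces $\ov\fkb_0$ to be $\s_\JJ$-fixed as well.

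\textbf{Finding a rational direction.} By Proposition~\ref{prop-gall-w-direction}(2), $\ov\Gamma$ goes in the $w$-direction for some $w\in W_0$. Let $U$ denote the set of walls crossed by $\ov\Gamma$, with each $H_{\alpha,k}\in U$ oriented so that $\fkb_\ell$ lies in $\{v:\langle\alpha,v\rangle>k\}$; since both endpoints of $\ov\Gamma$ are $\s_\JJ$-fixed, the (signed) set $U$ is $\s_\JJ$-stable. Form the open convex cone $C^*:=\bigcap_{H_{\alpha,k}\in U}\{v\in V:\langle\alpha,v\rangle>0\}$; it contains $w(C_0)$ and is $\s_\JJ$-invariant. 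Averaging any point of $C^*$ over the finite group $\langle\s_\JJ\rangle$ acting on $V$ produces a $\s_\JJ$-fixed point of $C^*$, so $C^*\cap V^{\s_\JJ}$ is a non-empty open subcone of $V^{\s_\JJ}$. Lemma~\ref{lemma-translations-far-from-walls} guarantees that $V^{\s_\JJ}$ is not contained in any finite root hyperplane, so regular elements are dense in $V^{\s_\JJ}$; approximating and clearing denominators yields $\mu\in\Lambda^{\s_\JJ}\cap C^*$ with $\langle\alpha,\mu\rangle\ne 0$ for every $\alpha\in\Phi$.

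\textbf{Assembly.} Set $\fkc:=\varepsilon^{N\mu}\fkb_\ell$ for a large integer $N$: this is a $\s_\JJ$-fixed alcove of $\brsA$ far from all $\brF$-walls, since $|\langle\alpha,N\mu\rangle|\to\infty$ for every $\alpha\in\Phi$. Choose any minimal gallery $\Gamma'$ in $\brsA$ from $\fkb_\ell$ to $\fkc$. Because $\mu$ is regular, it lies in some open Weyl chamber $w'(C_0)$, and $\mu\in C^*$ forces $U\subseteq w'(\Phi^+)$; hence $\ov\Gamma$ goes in the $w'$-direction. Similarly every wall $H_{\beta,m}$ separating $\fkb_\ell$ from $\fkc=\fkb_\ell+N\mu$ satisfies $\langle\beta,\mu\rangle>0$ (with the sign of $\beta$ chosen so $\fkc$ lies on the positive side), so $\beta\in w'(\Phi^+)$ and $\Gamma'$ also goes in the $w'$-direction. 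Corollary~\ref{cor-concat-gall} then gives that $\ov\Gamma\cdot\Gamma'$ is a minimal gallery in $\brsA$, so $\dist(\ov\fkb_0,\fkc)=\ell+\ell'$. Since $\rho$ is $1$-Lipschitz and fixes $\fkc\in\brsA$, we obtain
\[
\ell+\ell' \;=\; \dist(\rho(\fkb_0),\rho(\fkc))\;\le\;\dist(\fkb_0,\fkc)\;\le\;\ell+\ell',
\]
so $\Gamma\cdot\Gamma'$ is a minimal gallery in $\breve{\mathcal B}$, as required.

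\textbf{Main obstacle.} The delicate step is locating a $\s_\JJ$-fixed, regular element of $\Lambda$ inside the open cone $C^*$. It relies on combining three ingredients: (i) the $\s_\JJ$-invariance of $C^*$, which is available only because the hypothesis on $\delta(\fkb_0,\fkb_\ell)$ forces the retracted alcove $\ov\fkb_0$ to be rational; (ii) an averaging argument producing a $\s_\JJ$-fixed point inside an invariant open convex cone; and (iii) Lemma~\ref{lemma-translations-far-from-walls} together with $\mathop{\rm rk}_F\JJ>0$, ensuring that $V^{\s_\JJ}$ contains regular directions. Once such a $\mu$ is at hand, the acute-cone machinery and the $1$-Lipschitz property of $\rho$ handle the minimality assertions uniformly.
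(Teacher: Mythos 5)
Your proof is correct, but it takes a genuinely different route from the paper's. The paper first reduces (via an element of $\JJ$ and the retraction onto $\brsA$ centered at $\fkb_\ell$) to a gallery in the standard apartment, exactly as you do, but then passes to the \emph{rational} building $\mathcal B(\JJ, F)$: it regards $\delta(\fkb_0,\fkb_\ell)$ as an element of the Coxeter group $W_a^\JJ$, applies the acute-cone results (Prop.~\ref{prop-gall-w-direction}, Cor.~\ref{cor-concat-gall}) inside the rational apartment, and finds the far-away endpoint in the rational acute cone $C(\fka,w)$ using the transitivity of the rational finite Weyl group on rational chambers together with Lemma~\ref{lemma-translations-far-from-walls}; implicitly this relies on the compatibility between length-additivity in $W_a^\JJ$ and in $W_a$ (a reduced word in $W_a^\JJ$ with initial piece $\delta(\fkb_0,\fkb_\ell)$ refines to a length-additive product in $W_a$), and on the dictionary between $\sigma_\JJ$-stable $\brF$-alcoves and rational alcoves. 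You instead stay entirely at the $\brF$-level: you extract from the crossed walls the $\sigma_\JJ$-invariant open cone $C^*$, produce a $\sigma_\JJ$-fixed regular coweight $\mu\in C^*$ by averaging (here Lemma~\ref{lemma-translations-far-from-walls} — really the regular fixed direction $v-v'$ exhibited in its proof — guarantees that the fixed space meets no finite root hyperplane entirely), take $\fkc=\varepsilon^{N\mu}\fkb_\ell$, verify both pieces go in the same $w'$-direction, and recover minimality in $\breve{\mathcal B}$ from the distance-nonincreasing retraction. This buys you an argument that never invokes the rational Coxeter system $W_a^\JJ$ or the rational/absolute length compatibility, at the price of the small averaging and lattice-approximation step; it also makes explicit (via the $1$-Lipschitz inequality) the transfer of minimality back to the building, which the paper only asserts. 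Two harmless caveats: like the statement itself, your retraction $\rho_{\brsA,\fkb_\ell}$ presupposes $\fkb_\ell\subset\brsA$ (the paper arranges this by applying a suitable element of $\JJ$ first, and you would need the same reduction for the form in which the lemma is actually used), and the action of $\sigma_\JJ$ on $V$ in your averaging step should be understood as the finite-order linear part of its affine action on $\brsA$ — both points are easily supplied.
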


\begin{proof}
Applying a suitable element of $\JJ$ to the whole situation, we may assume that $\fkb_\ell = \fka$. We have the canonical retraction $\rho$ from the building to the standard apartment with respect to $\fka$ (cf.~\cite{AB} Def.~4.38). If $\mathscr A'$ is any apartment containing $\fka$, it induces an isomorphism between $\mathscr A'$ and the standard apartment. Since $\Gamma$ is minimal, there exists an apartment $\mathscr A'$ containing $\Gamma$, and $\rho$ thus maps $\Gamma$ to a minimal gallery in the standard apartment. Therefore it is enough to extend the image of $\Gamma$ under $\rho$ inside the standard apartment. In other words, we may assume that $\Gamma$ is contained in the standard apartment and ends in $\fka$.

Since $\delta(\fkb_0, \fkb_\ell)$ and $\fkb_\ell=\fka$ are fixed by $\sigma_\JJ$, $\fkb_0$ is also fixed by $\sigma_\JJ$ (not necessarily point-wise). Here we use that we have reduced, in the first step, to the case that $\Gamma$ is contained in a $\JJ$-rational apartment.

We apply Prop.~\ref{prop-gall-w-direction} to the building $\mathcal B(\JJ, F)$ (i.e., we consider $\delta(\fkb_0, \fkb_\ell)$ as an element of the Coxeter group $W_a^\JJ$, and accordingly get a minimal gallery in the standard apartment of $\mathcal B(\JJ, F)$): There exists $w$ in the corresponding finite Weyl group such that $\Gamma$ goes in the $w$-direction.

All the alcoves in the acute cone $C(\fka, w)$ (understood inside the standard apartment of $\mathcal B(\JJ, F)$) can, by definition, be reached from $\fka$ by a gallery in the $w$-direction, and concatenating such a gallery with $\Gamma$ gives us another minimal gallery, by Cor.~\ref{cor-concat-gall} (equivalently, a reduced word in $W_a^\JJ$ with initial piece $\delta(\fkb_0, \fkb_\ell)$). So it is enough to show that $C(\fka, w)$ contains alcoves arbitrarily far from all $\brF$-walls. Since $C(\fka, w)$ contains a translate of the finite Weyl chamber corresponding to $w$ (cf.~\cite{HN} Prop.~5.5), and the finite Weyl group of the $\JJ$-rational standard apartment acts transitively on the set of $\JJ$-rational finite Weyl chambers, the desired statement follows from Lemma~\ref{lemma-translations-far-from-walls}.
\end{proof}

\begin{example}
Clearly, the hypothesis that the affine Dynkin diagram of $G$ is connected is required in the above lemmas: Consider $\widetilde{A}_1 \times \widetilde{A}_1$ with $\mathbb Z/2\mathbb Z$ acting on the first factor by the non-trivial diagram automorphism, and on the second factor as the identity. Then the rational apartment is parallel to one of the walls.
\end{example}

\begin{lemma}\label{lem-far-from-walls}
Let $u, u' \in \tW$ such that $uW_K\ne u'W_K$, and let $w\in \tW$ such that $w\fka$ is sufficiently far away from the walls (depending on $u$, $u'$). Then $W_KwuW_K \ne W_Kwu'W_K$.
\end{lemma}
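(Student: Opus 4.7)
The plan is to argue by contradiction via a length comparison. Suppose that $W_K w u W_K = W_K w u' W_K$; then there exist $v_1, v_2 \in W_K$ with $v_1 w u v_2 = w u'$, equivalently
\[
w^{-1} v_1 w = u' v_2^{-1} u^{-1}.
\]
The easy case $v_1 = 1$ immediately gives $u' = u v_2 \in u W_K$, contradicting $u W_K \neq u' W_K$; so from now on I may assume $v_1 \in W_K \setminus \{1\}$.

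The right-hand side then has length at most $C(u, u') := \ell(u) + \ell(u') + \ell(w_{0,K})$, where $w_{0,K}$ is the longest element of the finite Coxeter group $W_K$; this bound is independent of $w$. To bound the left-hand side from below, I would write $w = \varepsilon^\lambda x$ with $\lambda \in \Lambda$ and $x \in W_0$. A direct computation shows $(w^{-1} v_1 w)(0) = x^{-1}(v_1(\lambda) - \lambda)$, and since the length function on $W_a$ dominates, up to a positive constant depending only on $\Phi$, the Euclidean displacement of the origin by any group element, I obtain $\ell(w^{-1} v_1 w) \geq c\, |v_1(\lambda) - \lambda|$ with $c = c(\Phi) > 0$.

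The crux is to show that $|v_1(\lambda) - \lambda|$ grows as $\lambda$ moves far from the finite root walls. Decomposing $v_1 = \varepsilon^\mu v_1^0$ inside $W_a \subset \Lambda \rtimes W_0$, one has $v_1(\lambda) - \lambda = (v_1^0 - \id)(\lambda) + \mu$ with $|\mu|$ uniformly bounded as $v_1$ ranges over $W_K$. Because $\Lambda$ is torsion-free and $W_K$ is finite, $v_1 \neq 1$ forces the linear part $v_1^0 \in W_0$ to be non-trivial. A standard fact about finite reflection groups identifies the fixed subspace $V^{v_1^0}$ as a proper intersection of finite root hyperplanes, so $d(\lambda, V^{v_1^0})$, and with it $|(v_1^0 - \id)(\lambda)|$, grows at least linearly in $\min_{\alpha \in \Phi} |\langle \alpha, \lambda \rangle|$. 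Taking the minimum over the finite set $W_K \setminus \{1\}$ produces a uniform lower bound, and I can then choose $\lambda$ so that $c\,|v_1(\lambda) - \lambda| > C(u, u')$ holds for every $v_1 \in W_K \setminus \{1\}$ simultaneously, contradicting the displayed equation.

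The main obstacle is the length lower bound $\ell(w^{-1} v_1 w) \geq c\, |v_1(\lambda) - \lambda|$ together with the uniform growth estimate on $|(v_1^0 - \id)(\lambda)|$ via the fixed-point flat $V^{v_1^0}$; once these standard geometric facts are set up, the rest is routine bookkeeping.
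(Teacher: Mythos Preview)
Your argument is correct, but it takes a genuinely different route from the paper's.

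The paper gives a short geometric proof: it introduces the \emph{$K$-walls} (the affine root hyperplanes through the vertex of $\fka$ of type $\tSS\setminus K$) and the corresponding \emph{$K$-chambers}. Since the origin is special, every $K$-wall is parallel to a finite root hyperplane, so ``far from the walls'' implies ``far from the $K$-walls''. For such $w$, the finite sets $wuW_K$ and $wu'W_K$ (viewed as sets of alcoves) are both contained in the single $K$-chamber containing $w\fka$. Because $W_K$ acts simply transitively on the $K$-chambers, the intersection of a double coset $W_K y W_K$ with that chamber recovers the right coset $yW_K$; hence $W_KwuW_K = W_Kwu'W_K$ forces $wuW_K = wu'W_K$, i.e.\ $uW_K = u'W_K$.

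Your approach is instead a metric/length comparison: you conjugate a nontrivial $v_1\in W_K$ by $w$ and show $\ell(w^{-1}v_1w)$ grows without bound as $\lambda$ moves far from the root hyperplanes, using that the linear part $v_1^0$ of $v_1$ is nontrivial (since $W_K$ is finite and contains no nontrivial translation) and that the fixed flat $V^{v_1^0}$ lies inside a root hyperplane. This contradicts the uniform bound $\ell(u'v_2^{-1}u^{-1})\le C(u,u')$. All the ingredients you invoke (bi-Lipschitz comparison of gallery length and displacement of the origin, Steinberg's description of fixed-point sets in a finite reflection group, invertibility of $v_1^0-\id$ on $(V^{v_1^0})^{\perp}$) are standard, and the bookkeeping is sound. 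The paper's argument is cleaner and requires no estimates; yours is more quantitative and would, if pushed, give an explicit ``sufficiently far'' threshold in terms of $\ell(u)$, $\ell(u')$, and root-system constants.
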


\begin{proof}
For $K= \tSS\setminus\{v\}$, we define the set of $K$-walls as the set of root hyperplanes of the (finite) root system obtained by changing the origin to $v$, i.e., the set of affine root hyperplanes passing through the vertex of the base alcove $\fka$ corresponding to $K$ (this construction also appeared in Section~\ref{sec:finiteness-property}). E.g., if $v= s_0$, then the $K$-walls are precisely the finite root hyperplanes. The $K$-walls are the root hyperplanes of a root system with Weyl group $W_K$. The chambers of this root system will be called the $K$-chambers.

Since the origin we fixed in $\breve{\mathscr A}$ is by assumption a special point, every $K$-wall is parallel to some finite root hyperplane. It follows that an alcove which is far from all finite root hyperplanes is also far from all $K$-walls.

If $w$ is far away from the $K$-walls, the finite sets $wuW_K$ and $wu'W_K$ are contained in the same $K$-chamber. Therefore to show the claim it is enough to ensure that $wuW_K \ne wu'W_K$ which follows directly from the assumption $uW_K \ne u'W_K$.
\end{proof}

\subsection{Subexpressions}

Given a Coxeter system $(W, \mathbb S)$ and $w\in W$, we say that a reduced expression $w=f_1, \dots f_{\ell(w)}$ (with the $f_i$ being simple affine reflections) has a tuple $(t_1, \dots, t_r)\in \BS^r$ as a subexpression, if the tuple $(t_1, \dots, t_r)$ can be obtained from $(f_1, \dots, f_{\ell(w)})$ by omitting some of the entries (but without changing their order).

\begin{lemma}\label{lem-subexpr}
Let $(W, \BS)$ be a Coxeter system. Let $t_0, \dots, t_r\in \BS$ be pairwise different elements such that $t_i t_{i-1} \ne t_{i-1} t_i$ for all $i=1, \dots, r$. Let $w\in W$ such that $t_0\cdots t_r \le w$.
\begin{enumerate}
\item
Every reduced expression for $w$ has $(t_0, \dots, t_r)$ as a subexpression.
\item
Let $s\in \BS$, $s\ne t_0$. Then $t_0\cdots t_r \le sw$. In particular $t_r$ is contained in the support of $sw$.
\end{enumerate}
\end{lemma}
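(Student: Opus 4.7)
The plan is to reduce both parts to the fact that, under the given hypotheses, $t_0 t_1 \cdots t_r$ is the \emph{unique} reduced expression for the element $u := t_0 \cdots t_r$; both claims then follow from the standard subword (Deodhar) characterization of the Bruhat order. To start, I would prove by induction on $r$ that $t_0 \cdots t_r$ is reduced: for the inductive step, the support of the (reduced, by induction) expression $t_0 \cdots t_{r-1}$ equals $\{t_0, \dots, t_{r-1}\}$, which by distinctness does not contain $t_r$; since every right descent of an element lies in its support, $t_r \notin D_R(t_0 \cdots t_{r-1})$, and hence $\ell(t_0 \cdots t_r) = r+1$.

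To establish uniqueness of the reduced expression for $u$ I would invoke Matsumoto--Tits: any two reduced expressions for the same element are connected by a sequence of braid moves. A commutation move (braid move with $m(s,t) = 2$) applied at some consecutive pair $t_{i-1}, t_i$ would require $t_i t_{i-1} = t_{i-1} t_i$, excluded by hypothesis. A higher braid move ($m(s,t) \ge 3$) would require a consecutive window of the form $s\,t\,s \cdots$ of length $m(s,t)$, whose first and third letters coincide, contradicting pairwise distinctness of the $t_i$. Hence no braid move can be applied to the word $t_0 \cdots t_r$, so $(t_0, \dots, t_r)$ is the only reduced expression for $u$. Part (1) now follows at once from the subword property: since $t_0 \cdots t_r \le w$, every reduced expression of $w$ contains some reduced expression of $u$ as a subword, and there is only one candidate.

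For part (2) I would split on whether $s \in D_L(w)$. If $s \notin D_L(w)$, then $w \le sw$ and the conclusion follows by transitivity of the Bruhat order. If $s \in D_L(w)$, choose a reduced expression $w = s\,f_2 \cdots f_{\ell(w)}$; by part (1), $(t_0, \dots, t_r)$ appears as a subexpression, and since $t_0 \ne s$ this subexpression must avoid the leading letter $s$ and therefore lie entirely inside $(f_2, \dots, f_{\ell(w)})$, which is a reduced expression for $sw$. The subword property then yields $t_0 \cdots t_r \le sw$. The final clause follows from the general fact $u \le v \Rightarrow \supp(u) \subseteq \supp(v)$, applied with $u = t_0 \cdots t_r$; note that $t_r \in \supp(u)$ by the reducedness established above.

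The main obstacle is the uniqueness of the reduced expression for $u$: this is the only step where both hypotheses---pairwise distinctness and non-commutativity of consecutive pairs---are genuinely used, and everything else reduces to formal bookkeeping with the subword characterization.
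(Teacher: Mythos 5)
Your proposal is correct and follows essentially the same route as the paper: establish that $t_0\cdots t_r$ has the given word as its unique reduced expression via the Matsumoto–Tits theorem (no braid or commutation move applies), deduce part (1) from the subword property of the Bruhat order, and handle part (2) by the case distinction on whether $s$ is a left descent of $w$, deleting the leading $s$ when it is. Your added details (the induction showing the word is reduced, and the explicit check that no braid move can apply) are correct fillings-in of steps the paper leaves implicit.
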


\begin{proof}
We start by proving the following special case of part (1): The element $t_0\cdots t_r \in W$ has a unique reduced expression (namely the specified one). This follows immediately from the result of Tits that any two reduced expressions for an element of a Coxeter group can be obtained from each other by a sequence of ``elementary homotopies'' (see~\cite{AB} Prop.~3.24), i.e., by applying the braid relations for a pair $s, s'\in \BS$ of simple reflections (including the case where $s$, $s'$ commute).

To prove (1) in the general case, assume $t_0\cdots t_r \le w$ for some $w$. The subword property of the Bruhat order (see e.g., \cite{Bjorner-Brenti} Thm.~2.2.2) states that starting from any reduced expression for $w$ we can obtain some reduce expression for $t_0\cdots t_r$ by omitting certain factors. By the special case proved already, this implies part (1).

Now part (2) follows easily: If $\ell(sw) = \ell(w)+1$, then there is nothing to prove. But if $\ell(sw) = \ell(w)-1$, then $w$ has a reduced expression starting with $s$, and $sw$ can then be obtained by omitting the first $s$ in that reduced expression. Since $s\ne t_0$, the assertion follows from (1).
\end{proof}

\subsection{The $\JJ$-stratification is finer than the BT-stratification}

We will now show that given EO types $w, w'$ and $j'\in \JJ$ with $Y(w)\ne j'Y(w')$, there exists $j\in \JJ$ such that $\inv_K(j, -)$ has different values on $Y(w)$ and $j'Y(w')$. (We know already that these values are constant on $Y(w)$ and $j'Y(w')$, resp.) This clearly implies that any two strata can be separated by the relative position with respect to some element of $\JJ$. We may and will also assume without loss of generality that $P=P'$ or $P' \not\subseteq P$.

It is easy to see that for $j' \in \JJ\setminus \JJ_1$ and arbitrary $j$, the values of $\inv_K(j, -)$ on $Y(w)$ and $j' Y(w')$ lie in different $W_a$-cosets within $\tW$ (i.e., the length $0$ component is different). Therefore we will assume from now on that $j'\in \JJ_1$, and we will find $j\in \JJ_1$ separating the two strata. This means that we may rephrase the problem by saying that we search an alcove $\fkj$ in the $\JJ$-rational building such that the $K$-Weyl distance $\delta_K(\fkj, -)$ separates the strata $Y(w)$ and $j' Y(w')$.

\begin{remark}
\begin{enumerate}
\item
It is easy to separate different strata in the full affine flag variety, using the Iwahori-Weyl distance. To ensure that the relative positions remain different after passing to $\brK$-double cosets, we will use Lemma~\ref{lem-far-from-walls} to reduce to a problem of comparison of left cosets rather than double cosets. This can eventually be handled using Lemma~\ref{lem-subexpr}.
\item
The elements $j\in \JJ$ produced by the proof below which separate different Bruhat-Tits strata will typically lie far away from the origin (when considered as alcoves). It seems difficult to give an explicit bound on this distance. It is easy to check in examples that the identity element $j=1$ does not necessarily separate all Bruhat-Tits strata of the form $Y(w)$, $Y(w')$, $w\ne w'$.
\end{enumerate}
\end{remark}

Denote by $P$ and $P'$, resp., the $\sigma$-supports of $w$ and $w'$, and by $\brP$, $\brP'$ the corresponding parahoric subgroups, so $Y(w)\subseteq \brP/\brI$, $j' Y(w')\subseteq j' \brP'/\brI$.

We consider $\brP/\brI$ and $j'\brP'/\brI$ as sets of alcoves in $\mathcal B(G, \brF)$. Both sets are residues in the sense of Section~\ref{sec:gate} and we apply~Prop.~\ref{AB-5.37} with $\CR = \brP/\brI$, $\CR' = j'\brP'/\brI$. Denote by $w_1$ the ``minimal distance'' between $\CR$ and $\CR'$, i.e., $w_1 = \min(\delta(\CR, \CR'))\in W_a^\JJ$, and let $\CR_1 = \proj_{\CR}(\CR')$, $\CR'_1  =\proj_{\CR'}(\CR)$. The proposition gives us a bijection between $\CR_1$ and $\CR'_1$ such that the relative position between any two alcoves corresponding to each other is $w_1$.

Let $\fkh\in Y(w)$, $\fkh'\in j'Y(w')$. Fix any $\JJ$-rational alcove $\fkb$ in $\mathcal R_1$, and let $\fkb'\in \CR'_1$ be the element corresponding to $\fkb$ under the above bijection. By Rmk.~\ref{rmk-gate-rational}, $\fkb'$ lies in the rational building. We obtain (using~Prop.~\ref{prop-coxeter-dlv}), denoting by $w_{0, P'}$ the longest element in $W_{P'}$,
\[
\delta(\fkb, \fkh') = \delta(\fkb, \fkb')\delta(\fkb', \fkh') = w_1 w_{0, P'}
\]
and $\ell(w_1 w_{0, P'}) = \ell(w_1) + \ell(w_{0, P'})$, i.e., choosing reduced expressions for $w_1$ and $w_{0, P'}$ we obtain a minimal gallery starting at the alcove $\fkh'$ with end point the alcove $\fkb$. We extend this minimal gallery to a minimal gallery from  $\fkh'$ to an alcove $\fkj\in\mathcal B(\JJ, F)$ which is far from the $K$-walls.  Here the element is sufficiently far away from the walls as soon as we can apply Lemma~\ref{lem-far-from-walls} for any $u\in W_P$ and $u'\in W_Pw_1w_{0, P'}$. This imposes only finitely many conditions and can therefore be achieved by Lemma~\ref{lemma-extend-gallery} (the affine Dynkin diagram is connected, and the $F$-rank of $\JJ$ is $>0$ in all Coxeter type situations).

If we replace the part from $\fkb$ to $\fkj$ in our gallery by any other minimal gallery from $\fkb$ to $\fkj$, the resulting gallery will still be minimal. Therefore we may assume that the gate $\fkg$ from $\fkj$ to $\CR$ is one of the alcoves of the gallery. This is a $\JJ$-rational alcove, and writing $w_3 =\delta(\fkj, \fkg)$ we have $\delta(\fkj, \fkh)=\delta(\fkj, \fkg)\delta(\fkg, \fkh)= w_3 w_{0, P}$.

Let $w_2:=\delta(\fkg, \fkb)$ be the relative position between $\fkg$ and $\fkb$.  Note that $w_2\in W_P$ because $\CR$ is convex and thus contains every minimal gallery between any two of its elements.

Summarizing, we have
\begin{align*}
& \delta(\fkj, \fkh) = \delta(\fkj, \fkg)\delta(\fkg, \fkh)= w_3 w_{0, P},\\
& \delta(\fkj, \fkh') = \delta(\fkj, \fkg)\delta(\fkg, \fkb)\delta(\fkb, \fkb')\delta(\fkb', \fkh')  = w_3 w_2 w_1 w_{0, P'}.
\end{align*}
The inequality $\delta_K(\fkj, \fkh) \ne \delta_K(\fkj, \fkh')$ which we would like to show thus amounts to the inequality
\[
W_K w_3 w_{0,P} W_K \ne W_K w_3 w_2 w_1 w_{0, P'} W_K.
\]
By Lemma~\ref{lem-far-from-walls} we see that it is enough to show that $w_{0, P}W_K \ne w_2 w_1 w_{0, P'} W_K$, or in other words that
\[
w_{0,P} w_2 w_1 w_{0, P'} \not\in W_K.
\]
Another equivalent reformulation of this condition is that $s_v \in \supp(w_{0,P} w_2 w_1 w_{0, P'})$ (where $\{v \} = \tSS\setminus K$ as above). To prove this, we distinguish cases.

\subsubsection{The case $j' Y(w') = Y(w')$}

In this case, we may assume that $j' = 1$, and have that $w_1=1$. Since $Y(w)\ne Y(w')$, we have $w\ne w'$, $P\ne P'$, and by the reduction step in the very beginning, we then have $P' \not\subseteq P$.

There exist simple reflections $s_v = t_r, t_{r-1}, \dots, t_0 \in W_{P'}$ such that the $t_i$ are pairwise different, $t_i\in W_P$ if and only if $i > 0$, and such that $t_i t_{i-1} \ne t_{i-1} t_i$ for all $i$. In fact, to find such elements, choose an arbitrary element of $P' \setminus P$, and connect it in the affine Dynkin diagram to $v$. Now define the $t_i$ starting with $t_r=s_v$ and following the chosen path until it leaves $P$ for the first time.

Then $t_0 t_1 \cdots t_r \le w_{0, P'}$. Applying Lemma~\ref{lem-subexpr} multiple times and using that $t_0\not\in W_P$, we see that for every element $u\in W_P$, the product $uw_{0, P'}$ also has $(t_0, t_1, \dots, t_r)$ as a subexpression, and in particular its support contains $t_r=s_v$. We apply this to $u = w_{0,P}w_2$ and obtain that $w_{0, P} w_2 w_1 w_{0, P'} = uw_{0, P'}\not\in W_K$.

\subsubsection{The case $j'Y(w')\ne Y(w')$}

We use the notation introduced in Section~\ref{sec:bt-stratification}. Specifically, let $\Sigma=\Sigma_w$ and $\Sigma'=\Sigma_{w'}$ denote the $\tau\sigma$-stable subsets of $\tSS$ consisting of the vertices in the affine Dynkin diagram adjacent to $P$ but $\not\in P$ (resp., $P'$).

The assumption that $j' Y(w') \ne Y(w')$ in $\Flag_K$ means that $j' \not\in \brP_{\tSS\setminus \Sigma'}$. In other words, $\supp(w_1)\cap \Sigma' \ne\emptyset$.

\medskip
\emph{Claim.} There exist simple reflections $s_v = t_r, \dots, t_1 \in W_{P'}$ and $t_0\le w_1$, $t_0\not\in W_P$ such that the elements $t_r, \dots t_0$ satisfy the assumptions of Lemma~\ref{lem-subexpr}.

\medskip
Let us first show that with elements as in the claim, we can finish the proof. Since $\ell(w_1 w_{0, P'}) = \ell(w_1) + \ell(w_{0, P'})$, $t_0\le w_1$ and $t_1 \cdots t_r\le w_{0, P'}$ imply that $t_0 t_1\cdots t_r\le w_1 w_{0, P'}$. We also have that $t_0\not\in\supp(w_{0, P} w_2)$ since the latter set is a subset of $P$. By Lemma~\ref{lem-subexpr} it follows that $s_v = t_r \in \supp(w_{0, P}w_2w_1w_{0, P'})$, as desired.

It remains to prove the claim. We have already seen that $\Sigma'\cap\supp(w_1)\ne \emptyset$, and if $\Sigma'\cap P = \emptyset$, we can choose any element in $\Sigma'\cap \supp(w_1)$ as $t_0$, and then define $t_1, \dots, t_r\in W_{P'}$ by ``connecting'' the vertex corresponding to $t_0$ with $v$ in the affine Dynkin diagram.

It remains to discuss the case $\Sigma' \cap P \ne \emptyset$. Since we have also (in the very beginning of the proof) reduced to the case $P' \not\subseteq P$, this can only happen in the case where the sets $\supps(u)$, $u\in \EOcox$, are \emph{not} linearly ordered by inclusion. More precisely, we must be in the case of Example~\ref{example2} (the affine Dynkin diagram is of type $\tilde{B}$, and $\sigma$ and $\tau$ both act by exchanging $0$ and $1$ and fixing all other vertices), and we have either $\Sigma= \{0\}$, $\Sigma' = \{1\}$, or $\Sigma= \{1\}$, $\Sigma' = \{0\}$.

We then want to choose as $t_0$ the simple reflection associated with the element of $\Sigma$. Looking at the affine Dynkin diagram in this case, we see immediately that we then find $t_1, \dots, t_r$ as required in the claim. To justify this choice, we need to show that $t_0 \le w_1$. We know that $w_1$ is the minimal length element in $W_Pw_1 W_{P'}$, and that it is $\ne 1$. Therefore, we cannot have $\supp(w_1)\subseteq P$. Since $\tSS \setminus P = \Sigma = \{t_0\}$ consists of only one element in the case at hand, it follows that indeed $t_0\in \supp(w_1)$.





\end{document}